\documentclass[pdflatex,sn-mathphys-num]{sn-jnl}

\usepackage{graphicx}%
\usepackage{multirow}%
\usepackage{amsmath,amssymb,amsfonts}%
\usepackage{amsthm}%
\usepackage{mathrsfs}%
\usepackage[title]{appendix}%
\usepackage{xcolor}%
\usepackage{textcomp}%
\usepackage{manyfoot}%
\usepackage{booktabs}%
\usepackage{algorithm}%
\usepackage{algorithmicx}%
\usepackage{algpseudocode}%
\usepackage{listings}%

\theoremstyle{thmstyleone}%
\newtheorem{theorem}{Theorem}%
\newtheorem{proposition}[theorem]{Proposition}%

\newtheorem{corollary}[theorem]{Corollary}%
\newtheorem{remark}{Remark}%
\newtheorem{lemma}[theorem]{Lemma}%
\newtheorem{conjecture}{Conjecture}%

\theoremstyle{thmstylethree}%
\def\C{\mathbb{C}}
\def\N{\mathbb{N}}
\def\R{\mathbb{R}}

\begin{document}

\title[Exceptional Component]{The Geometry of the Exceptional Component of Degree Two Foliations on $\mathbb{P}^3$}

\author*[1]{\fnm{Claudia R.} \sur{Alcántara}}

\author[2]{\fnm{Dominique} \sur{Cerveau}}



\affil*[1]{\orgdiv{Departamento de Matemáticas},
\orgname{Universidad de Guanajuato},
\orgaddress{\city{Guanajuato}, \postcode{36024},
\state{Guanajuato}, \country{Mexico.}}
E-mail: claudia@cimat.mx}

\affil[2]{\orgdiv{Université de Rennes I},
\orgname{IRMAR},
\orgaddress{\street{Campus Beaulieu}, \city{Rennes Cedex},
\postcode{35042}, \country{France.}}
E-mail: dominique.cerveau@univ-rennes1.fr}

\abstract{We study the exceptional component of the space $\mathbb{F}(2,\mathbb{P}^3)$, of codimension-one foliations of degree two on $\mathbb{P}^3$. We describe the geometry of its boundary and prove that it has four irreducible components, all of dimension $12$. Three of these components contain a dense subset given by the orbit of a logarithmic foliation of type $(1,1,2)$, while the fourth contains a family of pull-back type foliations from $\mathbb{P}^2$ whose orbits have dimension $11$.}

\keywords{Holomorphic foliations; exceptional component; representation weights; degenerations.}

\pacs[MSC Classification]{37F75 (primary); 14L30, 20G05}

\maketitle

\section{Introduction}

The study of the geometry of the irreducible components of the space of codimension-one foliations on $\mathbb{P}^3$ is an important problem in the theory of holomorphic foliations. In particular, understanding the structure of these components and their boundaries is a natural question.

Let $\mathbb{F}(2,\mathbb{P}^3)$ denote the space of codimension-one foliations of degree two on $\mathbb{P}^3$. In this paper, we focus on one of the irreducible components of $\mathbb{F}(2,\mathbb{P}^3)$. The approach developed here provides methods that may be applicable to the study of the geometry and boundaries of the remaining components.
\\

In \cite{cl}, Cerveau and Lins Neto proved that $\mathbb{F}(2,\mathbb{P}^3)$ has six irreducible components. These components are Zariski-open subsets of Zariski-closed subsets of a projective space, and although the generic foliation in each of them is known, their boundaries are still not completely understood. 

Among these components is the exceptional component, defined as the closure of the $SL_4(\mathbb{C})$-orbit of a distinguished foliation $\mathcal{F}_{\omega}$. Despite this seemingly simple description, the geometry of this component remains only partially understood. In particular, it has long been unknown which foliations belong to its boundary. The main goal of this work is to describe the geometry of the boundary of the exceptional component.

Some properties of the exceptional component are already known. For instance, it is a $13$-dimensional algebraic variety, and its degree is $168208$, a result established by Vainsencher and Rossini \cite{RossiniVainsencher}. In this work, we exploit the fact that the exceptional component contains a dense $SL_4(\mathbb{C})$-orbit, and we apply techniques from algebraic group theory and geometric invariant theory to study its geometry. Our first main result shows that the boundary of the exceptional component is the union of four irreducible components of dimension $12$.

A second key ingredient is the representation of $SL_4(\mathbb{C})$ on the vector space of twisted differential forms. Using this representation, we obtain an explicit description of the foliations lying in the boundary of the exceptional component. Moreover, the irreducible component can be visualized as a polytope contained in the weight polytope of the full representation space of twisted differential forms. From this perspective, the boundary corresponds to weight vectors associated with the faces of this polytope, which generate the degenerations appearing in the boundary of the component.
We explicitly describe the one-parameter subgroups whose degenerations yield all foliations in the boundary.
\\

To obtain the results, we will proceed as follows. We first prove that every foliation in the boundary of the exceptional component is, up to a change of coordinates, contained in the closure of the orbit of the group $P$, of upper triangular matrices. This reduces the study of the boundary of $\overline{SL_4(\mathbb{C})\cdot \omega}$ to the analysis of $\overline{P\cdot \omega}$, where all boundary foliations can already be detected. We then show that the closure of this $P$-orbit has four irreducible components. For three of these components, we obtain an open subset that corresponds to the orbit of a foliation. For the remaining component, we describe a family of foliations arising as degenerations of the rational first integral of the exceptional foliation; these foliations are of pull-back type.
\\

The paper is organized as follows. In Section~2 we present the necessary preliminaries. We define the space of codimension-one foliations on $\mathbb{P}^3$ and describe the irreducible components of the space of degree-two codimension-one foliations on $\mathbb{P}^3$. We also study the $\mathfrak{sl}_4$-representation associated with the natural action of changes of coordinates on the space of foliations.

In Section~3 we give an explicit description of the automorphism group of the exceptional foliation, whose orbit closure defines the exceptional component. We also obtain a decomposition of this group, this decomposition will play a fundamental role in the subsequent constructions.

In Section 4, we prove that, up to a change of coordinates, all foliations in the boundary can be obtained by considering only the action of the subgroup of upper triangular matrices in $SL_4(\C)$. Using the geometry of the orbit under this group, we show that the boundary of the exceptional component contains four irreducible components.

Finally, Section~5 is devoted to determining all the foliations contained in the irreducible components of the boundary of the exceptional component. To this end, we use the representation to construct the associated weight polytope and determine all of its faces. Each face corresponds to a one-parameter subgroup, which we explicitly determine, together with the corresponding limiting foliations.

\section{Preliminaires}

Let $s \in \N \cup \{0\}$ and consider the vector space:
\begin{align*}
&H^0\big(\mathbb{P}^3, \Omega_{\mathbb{P}^3}^1(s+2)\big)=\Big\{\omega=A_1dz_1+A_2dz_2+A_3dz_3+A_4dz_4: \\&\textrm{$A_i \in \C[z_1,z_2,z_3,z_4]$ is homogeneous of degree $s+1$ for $i=1,2,3,4$ and $\sum_{i=1}^4 z_iA_i=0$}\Big\}
\end{align*}

\noindent A holomorphic foliation $\mathcal{F}$ of codimension-one and degree $s$ on $\mathbb{P}^3$  is given by the projective class of a 1-form, $[\omega] \in \mathbb{P}(H^0(\mathbb{P}^3, \Omega_{\mathbb{P}^3}^1(s+2))$, such that it satisfy the integrability condition:  $\omega \wedge d\omega=0$. If $\mathcal{F}_{\omega}$ is the foliation given by $\omega=\sum_{i=1}^4 A_idz_i$ then its singular set is the algebraic variety $\mathbb{V}(A_1,A_2,A_3,A_4) \subset \mathbb{P}^3$. We denote by $\mathbb{F}(s,\mathbb{P}^3)$ the space of foliations with singular set of codimension $\geq 2$.
\\

Then $\mathbb{F}(s,\mathbb{P}^3)$ can be identified with a Zariski's locally closed set in the projective space $\mathbb{P}(H^0(\mathbb{P}^3, \Omega_{\mathbb{P}^3}^1(s+2))$, which has dimension $4 \binom{s+4}{3} - \binom{s+5}{3}-1$. 
\\

It is known from the work by Cerveau and Lins Neto \cite{cl} that $\mathbb{F}(2,\mathbb{P}^3)$ has six irreducible components. We now describe the generic element of each of these components.

\begin{enumerate}
\item $S(2)$: linear pull-back of a foliation on $\mathbb{P}^2$ of degree $2$.
\item $\overline{R(2,2)}$: foliation with a rational first integral $\frac{f}{g}$, where $f, g \in \C[z_1,z_2,z_3,z_4]$ have degree 2 and are irreducible.
\item $\overline{R(1,3)}$: foliation with a rational first integral $\frac{f}{L^3}$, where $f$ has degree $3$ and $L$ has degree $1$.
\item $\overline{L(1,1,1,1)}$: logarithmic foliation given by the 1-forms:

$$\omega=L_1L_2L_3L_4 \sum_{i=1}^4 \lambda_i  \frac{dL_i}{L_i},$$

\noindent where $L_1, L_2, L_3$ and $L_4$ in $\C[z_1,z_2,z_3,z_4]$ have degree $1$ and $\sum_{i=1}^4 \lambda_i=0$.

\item $\overline{L(1,1,2)}$: logarithmic foliations given by the 1-forms:

$$\omega=f_1f_2f_3 \sum_{i=1}^3 \lambda_i  \frac{df_i}{f_i},$$

\noindent where $f_1, f_2$ define different hyperplanes, $f_3$ defines an irreducible hypersurface of degree $2$ and $\lambda_1+\lambda_2+2\lambda_3=0$.

\item The exceptional component $\overline{E}$.
\end{enumerate}

The exceptional component is the unique irreducible component that is not obtained by the standard logarithmic or pull-back constructions. It is defined as the Zariski closure of the orbit of a single foliation $\omega$ under the natural action of $SL_4(\mathbb{C})$, that is,
\[
\overline{E} \;=\; \overline{SL_4(\mathbb{C}) \cdot \omega}.
\]

\subsection{The Representation $H^0(\mathbb{P}^3, \Omega_{\mathbb{P}^3}^1(4))$ of $SL_4(\C)$}

In this subsection we will study the representation of the Lie algebra $\mathfrak{sl}_4$ of $SL_4(\C)$ given by the linear action:

\begin{align*}
SL_4(\C) \times H^0(\mathbb{CP}^3, \Omega_{\mathbb{CP}^3}^1(4)) &\to H^0(\mathbb{CP}^3, \Omega_{\mathbb{CP}^3}^1(4))\\
\Big((g_{ij})^{-1},  \sum_{i=1}^4 A_idz_i\Big) &\mapsto \sum_{i=1}^4 A_i((g_{ij})(z_1,z_2,z_3,z_4))(g_{i1}dz_1+g_{i2}dz_2+g_{i3}dz_3+g_{i4}dz_4).
\end{align*}


This representation plays an important role in the description of geometric properties not only of the irreducible component studied in this work, but also of all irreducible components of the space of foliations on $\mathbb{F}(2,\mathbb{P}^3)$.

Let $V=\mathbb{P}(H^0(\mathbb{P}^3, \Omega_{\mathbb{P}^3}^1(4))$, then the above action induces a representation:

\begin{align*}
\mathfrak{sl}_4 \to End(V),
\end{align*}

\noindent where $\mathfrak{sl}_4$ is the Lie algebra of $SL_4(\C)$. This representation has dimension $45$. If we denote $v_{ij}=z_idz_j-z_jdz_i$, then we can identify the second exterior power of $\C^4$, denoted by $\wedge^2(\C^4)$, with the $\C$-vector space generated by 

$$\Big\{v_{12}, v_{13}, v_{14}, v_{23}, v_{24}, v_{34} \Big\}.$$

\noindent With this, we can see that the representation is a vector subspace of:


$$\wedge^2( \C^4) \otimes Sym^2((\C^4)^*).$$

On the other hand, we must take the generators $L_i$, for $i=1,2,3,4$ 
of the dual vector space $\mathfrak{t}^*$, where $\mathfrak{t}$ is the space of diagonal matrices
of $\mathfrak{sl}_4$. These linear functionals are defined by:

\begin{align*}
L_i: \mathfrak{t} &\to \C \\
(a_1,a_2,a_3,a_4) &\mapsto a_i.
\end{align*}

\noindent We are going to define a norm through the inner product  $\langle L_i, L_j \rangle= \delta_{ij}-\frac{1}{4}$. An element 
$-\alpha=-(b_1L_1+b_2L_2+b_3L_3+b_4L_4) \in \mathfrak{t}^*$ is called weight of the representation if the following vector space has positive dimension:

$$V_{-\alpha}:=\big\{v \in V:  \exp{(a_1,a_2,a_3,a_4)}\cdot v= \exp{(-\alpha(a_1,a_2,a_3,a_4))}v, \forall (a_1,a_2,a_3,a_4) \in  \mathfrak{t} \big\},$$

\noindent in this case, $V_{-\alpha}$ is called the weight space of the representation associated to $-\alpha$. We will say that a non zero vector $v \in V_{-\alpha}$ is a weight vector of  weight $-\alpha$.
\\

We know that the above representation is irreducible  and that $V$ is the direct sum of the weight spaces (see chapter 15 of \cite{fulton}). We list the weight spaces in the table \ref{tablaPesos}, together with their dimension, a basis for each of them, and the length determined by the inner product.

\begin{table}[htbp]
\centering
\scriptsize
\setlength{\tabcolsep}{4pt}
\renewcommand{\arraystretch}{1.1}
\begin{tabular}{|c|c|c|}
\hline
\textbf{Weight} & \textbf{Basis} & $\|\alpha\|^2$ \\
\hline
$-L_3-3L_4$ & $z_4^2v_{34}$ & $6$ \\
$-3L_3-L_4$ & $z_3^2v_{34}$ & $6$ \\
$-L_2-3L_4$ & $z_4^2v_{24}$ & $6$ \\
$-L_2-3L_3$ & $z_3^2v_{23}$ & $6$ \\
$-3L_2-L_4$ & $z_2^2v_{24}$ & $6$ \\
$-3L_2-L_3$ & $z_2^2v_{23}$ & $6$ \\
$-L_1-3L_4$ & $z_4^2v_{14}$ & $6$ \\
$-L_1-3L_3$ & $z_3^2v_{13}$ & $6$ \\
$-L_1-3L_2$ & $z_2^2v_{12}$ & $6$ \\
$-3L_1-L_4$ & $z_1^2v_{14}$ & $6$ \\
$-3L_1-L_3$ & $z_1^2v_{13}$ & $6$ \\
$-3L_1-L_2$ & $z_1^2v_{12}$ & $6$ \\
\hline
$-2L_3-2L_4$ & $z_3z_4v_{34}$ & $4$ \\
$-2L_2-2L_4$ & $z_2z_4v_{24}$ & $4$ \\
$-2L_2-2L_3$ & $z_2z_3v_{23}$ & $4$ \\
$-2L_1-2L_4$ & $z_1z_4v_{14}$ & $4$ \\
$-2L_1-2L_3$ & $z_1z_3v_{13}$ & $4$ \\
$-2L_1-2L_2$ & $z_1z_2v_{12}$ & $4$ \\
\hline
$L_1-L_4$ & $z_4^2v_{23},\, z_3z_4v_{24}$ & $2$ \\
$L_1-L_3$ & $z_3z_4v_{23},\, z_3^2v_{24}$ & $2$ \\
$L_2-L_4$ & $z_4^2v_{13},\, z_3z_4v_{14}$ & $2$ \\
$L_2-L_3$ & $z_3z_4v_{13},\, z_3^2v_{14}$ & $2$ \\
$L_3-L_4$ & $z_4^2v_{12},\, z_2z_4v_{14}$ & $2$ \\
$L_4-L_3$ & $z_3^2v_{12},\, z_2z_3v_{13}$ & $2$ \\
$L_3-L_2$ & $z_2z_4v_{12},\, z_2^2v_{14}$ & $2$ \\
$L_4-L_2$ & $z_2z_3v_{12},\, z_2^2v_{13}$ & $2$ \\
$L_2-L_1$ & $z_1z_4v_{13},\, z_1z_3v_{14}$ & $2$ \\
$L_4-L_1$ & $z_1z_3v_{12},\, z_1z_2v_{13}$ & $2$ \\
$L_3-L_1$ & $z_1z_4v_{12},\, z_1z_2v_{14}$ & $2$ \\
$L_1-L_2$ & $z_2z_4v_{23},\, z_2z_3v_{24}$ & $2$ \\
\hline
$0$ & $z_3z_4v_{12},\, z_2z_4v_{13},\, z_2z_3v_{14}$ & $0$ \\
\hline
\end{tabular}
\caption{Weights of the representation $H^0(\mathbb{P}^3,\Omega^1(4))$.}
\label{tablaPesos}
\end{table}


In order to visualize the weight spaces of this representation in 
$\R^3$, we make the following identification.

\begin{align*}
L_1&\to (-1,1,1)\\
L_2& \to (1,-1,1)\\
L_3& \to (1,1,-1)\\
L_4& \to (-1,-1,-1).
\end{align*}

We then draw all the weights of the representation in $\R^3$. The convex hull of these points provides a useful geometric visualization of the representation (see figure \ref{fig:representation}).

\begin{figure}[htbp]
\centering
\includegraphics[scale=0.1]{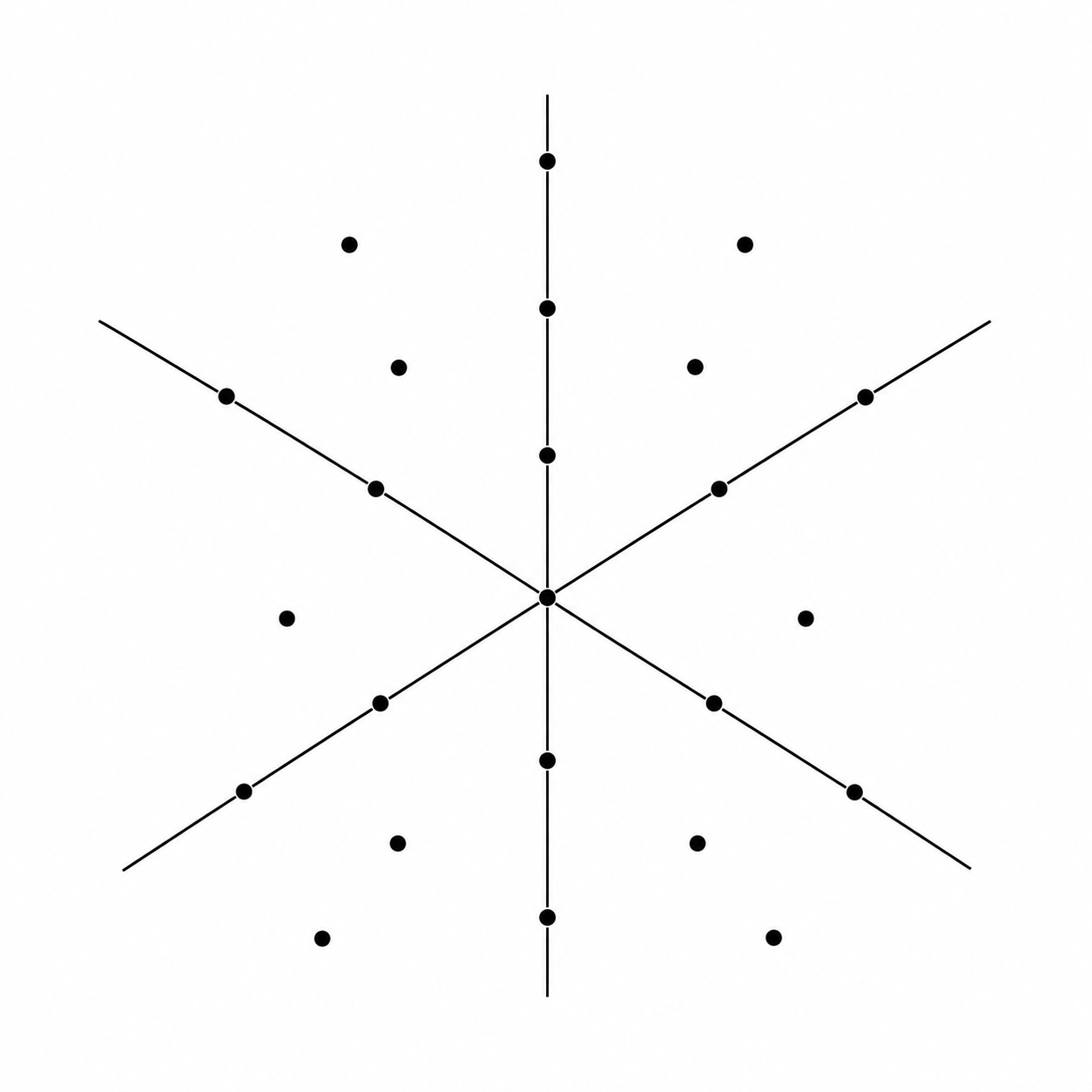}
\caption{\centering The representation $H^0(\mathbb{CP}^3,\Omega_{\mathbb{CP}^3}^1(4))$.}
\label{fig:representation}
\end{figure}

\section{The Automorphism Group of the Exceptional Foliation}



In this section, we study the automorphism group of the exceptional foliation in detail, as a first step toward describing the geometry of the exceptional component.
\\

Recall that $v_{ij}=z_i\,dz_j-z_j\,dz_i$. Consider the exceptional
codimension-one foliation $\mathcal{F}_{\omega}$ of degree $2$ on
$\mathbb{P}^3$, defined by the $1$-form:

$$w=3z_3z_4v_{13}-2z_3^2v_{14}+z_2z_3v_{24}-2z_2z_4v_{23}+z_2z_4v_{14}-3z_4^2v_{12}.$$

\noindent This foliation has singular set $\mathbb{V}(z_4,z_3) \cup \mathbb{V}(z_4,2z_1z_3-z_2^2) \cup \mathbb{V}(2z_2^2-3z_1z_3,3z_1z_4-z_2z_3,z_3^2-2z_2z_4)$ and admits the rational first integral $\frac{(z_1z_4^2-z_2z_3z_4+\frac{z_3^3}{3})^2}{(z_2z_4-\frac{z_3^2}{2})^3}$.
\\

We know that $PGL_4(\C)=Aut(\mathbb{P}^3)$ acts in the space of foliations $\mathbb{F}(2,\mathbb{P}^3)$ by change of coordinates, in fact 

$$\overline{PGL_4(\C)\cdot \omega},$$

\noindent is an irreducible algebraic component of $\mathbb{F}(2,\mathbb{P}^3)$, it is called the exceptional component (see \cite{cl}).
\\

Throughout this article, we work with $SL_4(\mathbb{C})$ instead of $PGL_4(\mathbb{C})$, as this does not affect the results relevant to our study. Since $SL_4(\mathbb{C})$ is isogenous to $PGL_4(\mathbb{C})$, and its one-parameter subgroups are easier to describe explicitly, this choice is technically convenient.
\\

We begin by describing the subgroup of automorphisms of $SL_4(\mathbb{C})$ that preserve the $1$-form $\omega$ up to a nonzero scalar multiple.

\begin{proposition}The automorphisms group of the exceptional foliation $\mathcal{F}_{\omega}$ given by the 1-form $\omega$ is

\begin{align*}
Aut(\mathcal{F}_{\omega})=\Bigg\{
\left(\begin{array}{cccc}
a_{11}&a_{12}&\frac{a_{12}^2a_{33}}{2a_{22}^2}&\frac{a_{12}^3a_{44}}{6a_{22}^3} \\
0&a_{22}&\frac{a_{12}a_{33}}{a_{22}}&\frac{a_{12}^2a_{44}}{2a_{22}^2}\\
0&0&a_{33}&\frac{a_{12}a_{44}}{a_{22}}\\
0&0&0&a_{44}
\end{array}\right) \in SL_4(\C): a_{22}^2=a_{11}a_{33},a_{22}a_{33}=a_{11}a_{44}\Bigg\}.
\end{align*}
\end{proposition}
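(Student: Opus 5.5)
The plan is to prove the two inclusions separately. In both directions I will work with the rational first integral $F=P^2/Q^3$, where
\[
P=z_1z_4^2-z_2z_3z_4+\tfrac{1}{3}z_3^3,\qquad Q=z_2z_4-\tfrac{1}{2}z_3^2 ,
\]
rather than with $\omega$ directly.

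For the inclusion $\supseteq$, I will verify invariance directly. Write the displayed matrix as $g=D\exp(tN)$. Here $N$ is the nilpotent matrix with ones on the superdiagonal, i.e. the principal nilpotent (the generator of the unipotent part). The diagonal part $D$ is constrained by $a_{22}^2=a_{11}a_{33}$ and $a_{22}a_{33}=a_{11}a_{44}$, which says the diagonal entries form a geometric progression, $D=\mathrm{diag}(c,c\lambda,c\lambda^2,c\lambda^3)$. Under this decomposition the entry $t$ corresponds to $a_{12}/a_{22}$, up to the rescaling by $D$. The entries $a_{12}^k/(k!\,a_{22}^k)$ appearing in the matrix are exactly what one gets from $\exp(tN)$ conjugated by $D$, which makes the factorization evident.

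It then suffices to check two things. First, $D$ multiplies $P$ and $Q$ by characters $\chi_P,\chi_Q$ with $\chi_P^2=\chi_Q^3$, so $D$ rescales $\omega$; equivalently, the torus weights of all six monomials of $\omega$ in Table \ref{tablaPesos} agree on this one-dimensional torus. Second, the vector field of $N$ (acting by the convention of the action in Section 2) annihilates $P$ and $Q$. Each is a short computation. For instance, the derivation $z_1\partial_{z_2}+z_2\partial_{z_3}+z_3\partial_{z_4}$, or its transpose, kills $Q$ and $P$; these are the classical binary cubic covariants. From this, $\exp(tN)^*F=F$ and hence $\exp(tN)^*\omega=\omega$. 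Because $d\log F$ is proportional to $\omega$, invariance of $F$ up to a constant gives invariance of $\omega$ up to a scalar.

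For $\subseteq$, I will first compute the Lie algebra of the stabilizer of $[\omega]$. This means solving the linear system $X\cdot\omega=\mu\,\omega$ for $X\in\mathfrak{sl}_4$ using the weight decomposition. The weights occurring in $\omega$ are $L_2-L_3$, $L_1-L_3$, $L_1-L_2$, $L_3-L_4$, $L_2-L_4$, $L_1-L_4$, all of them positive roots. A root vector $E_{ij}$ shifts weights by a root, and the diagonal part must scale all components equally. A bookkeeping argument on the weights then shows that only the upper-triangular root vectors can contribute. The expected conclusion is that the stabilizer algebra is two-dimensional, spanned by $\mathrm{diag}(3,1,-1,-3)$ and $N$. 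This fits the dimension count, since $\dim\overline E=13=15-2$.

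Passing from the Lie algebra to the group is the main obstacle: the stabilizer may be disconnected. To handle it I will use geometry. Any $g\in Aut(\mathcal F_\omega)$ preserves the singular set. It therefore preserves the line $\mathbb V(z_3,z_4)$, the conic $\mathbb V(z_4,2z_1z_3-z_2^2)$ and its plane $z_4=0$, and the twisted cubic component. Alternatively, $g$ permutes the fibers of $F$ and preserves the special fibers $P=0$ and $Q=0$. Preserving the flag (point of tangency, line, plane) of this configuration forces $g$ to be upper triangular. The normalizer condition $gNg^{-1}\in\mathbb C N$, together with the preservation of $Q$ up to scalar, then forces the geometric progression on the diagonal and the stated off-diagonal entries. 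If this flag argument fails, the fallback is to reduce $g$ to a diagonal element by composing with elements of the identity component and to check directly which diagonal matrices preserve $[\omega]$. Diagonal matrices are easy to handle via the weights listed above, and the equations $a_{22}^2=a_{11}a_{33}$ and $a_{22}a_{33}=a_{11}a_{44}$ fall out exactly.
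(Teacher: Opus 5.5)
The paper gives no argument beyond asserting a direct (omitted) computation of the stabilizer, so your route is genuinely different. For $\supseteq$ you use invariance of $F=P^2/Q^3$; for $\subseteq$ you use a flag argument plus a normalizer argument.

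The $\supseteq$ half is correct, up to one slip. The field $z_1\partial_{z_2}+z_2\partial_{z_3}+z_3\partial_{z_4}$ sends $Q$ to $z_1z_4$. The field that kills $P$ and $Q$ is $z_2\partial_{z_1}+z_3\partial_{z_2}+z_4\partial_{z_3}$, the generator of $z\mapsto n(\alpha)z$.

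Your flag argument is sound, and it is the real gain over brute force. The components of the singular set have degrees $1,2,3$, so each is preserved. The line $\mathbb V(z_3,z_4)$ is tangent to the conic at $(1:0:0:0)$, and the conic spans $\{z_4=0\}$. Hence \emph{every} element of the stabilizer, not only the identity component, is upper triangular.

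Two steps, however, fail as written.

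First, $\omega$ has only the three weights $L_1-L_2$, $L_2-L_3$, $L_3-L_4$: its six monomials pair up into $\omega_1,\omega_2,\omega_3$, two in each of these $2$-dimensional weight spaces. It does not have all six positive roots. With your list, the principle that the diagonal part scales all components equally would force $H=0$, because $L_1-L_3=(L_1-L_2)+(L_2-L_3)$. That contradicts $\mathrm{diag}(3,1,-1,-3)\in\mathfrak g$, and also your own correct remark that all monomials of $\omega$ have the same weight on $D$. Even with the right weights, eliminating the other root vectors is not pure bookkeeping. For instance, $E_{21}\omega_1$, $E_{32}\omega_2$, $E_{43}\omega_3$ all land in the same $3$-dimensional zero weight space, so an explicit computation is needed.

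Second, take $g$ upper triangular. The condition $gNg^{-1}\in\C N$ only gives $g=\Delta\,(c_0I+c_1N+c_2N^2+c_3N^3)$ with $\Delta$ diagonal in geometric progression, and $Q$-invariance only adds $2c_0c_2=c_1^2$. The matrix $I+\gamma E_{14}$ commutes with $N$ and fixes $Q$, yet it sends $P$ to $P+\gamma z_4^3$, so it is not in the stabilizer. You must also use $P$, or use that $g$ normalizes all of $\mathfrak g$. Indeed $\mathrm{Ad}(\exp(\gamma_1N+\gamma_2N^2+\gamma_3N^3))\,\mathrm{diag}(3,1,-1,-3)=\mathrm{diag}(3,1,-1,-3)-2\gamma_1N-4\gamma_2N^2-6\gamma_3N^3$, which forces $\gamma_2=\gamma_3=0$.

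The simplest repair drops the Lie algebra entirely. After the flag argument, impose $g^*Q\propto Q$ and $g^*P\propto P$ on an upper triangular $g$. This is legitimate because, by unique factorization and irreducibility of $Q$, no other member $\mu P^2-\nu Q^3$ of the pencil is the square of a cubic or the cube of a quadric. So $g$ must fix the fibres $F=0$ and $F=\infty$.

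Comparing coefficients then gives $a_{22}a_{44}=a_{33}^2$ and $a_{11}a_{44}^2=a_{22}a_{33}a_{44}=a_{33}^3$, which is the geometric progression. With $\alpha=a_{12}/a_{22}$ it also gives $a_{34}=\alpha a_{44}$, $a_{23}=\alpha a_{33}$, $a_{24}=\tfrac{\alpha^2}{2}a_{44}$, $a_{13}=\tfrac{\alpha^2}{2}a_{33}$ and $a_{14}=\tfrac{\alpha^3}{6}a_{44}$. This is exactly the stated group, disconnectedness included.
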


The proposition follows by a direct computation of the stabilizer of $\mathcal{F}_{\omega}$ under the natural action of $SL_4(\mathbb{C})$. Since the computation is straightforward but lengthy, we omit the details, as they are not essential to the arguments that follow.

We next decompose this automorphism group. This decomposition will be useful throughout the constructions that follow. Let 

\begin{align*}
N=&\Bigg\{n(\alpha)=
\left(\begin{array}{cccc}
1&\alpha&\frac{\alpha^2}{2!}&\frac{\alpha^3}{3!} \\
0&1&\alpha&\frac{\alpha^2}{2!}\\
0&0&1&\alpha\\
0&0&0&1
\end{array}\right): \alpha \in \C \Bigg\}
\end{align*}

\noindent and 

\begin{align*}
D=&\Bigg\{\left(\begin{array}{cccc}
a_{11}&0&0&0 \\
0&a_{22}&0&0\\
0&0&a_{33}&0\\
0&0&0&a_{44}
\end{array}\right) : a_{22}^2=a_{11}a_{33}, a_{22}a_{33}=a_{11}a_{44}, a_{11}a_{22}a_{33}a_{44}=1 \Bigg\}
\end{align*}

\noindent The equations $a_{11}a_{22}a_{33}a_{44}=1$, $a_{22}^2=a_{11}a_{33}$ and $a_{22}a_{33}=a_{11}a_{44}$ leaves just the solution $(a_{11},a_{22},a_{33},a_{44})= (\pm t^3,t,\pm t^{-1},t^{-3})$ for $t \in \C^*$. Then an element in $D$ depends on a parameter and a sign, so we denote it by $d(a_{11}, a_{22})$ to encode all the information. From this, we easily deduce the following lemma.

\begin{lemma}
    Let 
    \begin{align}
g=\left(\begin{array}{cccc}
a_{11}&a_{12}&\frac{a_{12}^2a_{33}}{2a_{22}^2}&\frac{a_{12}^3a_{44}}{6a_{22}^3} \\
0&a_{22}&\frac{a_{12}a_{33}}{a_{22}}&\frac{a_{12}^2a_{44}}{2a_{22}^2}\\
0&0&a_{33}&\frac{a_{12}a_{44}}{a_{22}}\\
0&0&0&a_{44}
\end{array}\right) \in Aut(\mathcal{F}_{\omega}),
\end{align}
    
\noindent then $g=n(a_{12}a_{22}^{-1})d(a_{11},a_{22})$.
\end{lemma}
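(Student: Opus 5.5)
The plan is a direct matrix verification. By the proposition describing $Aut(\mathcal{F}_{\omega})$, the diagonal entries of $g$ satisfy $a_{22}^2=a_{11}a_{33}$ and $a_{22}a_{33}=a_{11}a_{44}$. Since $g$ is upper triangular and lies in $SL_4(\C)$, we also have $a_{11}a_{22}a_{33}a_{44}=\det g=1$. Hence $\mathrm{diag}(a_{11},a_{22},a_{33},a_{44})$ lies in $D$. By the parametrization $(\pm t^3,t,\pm t^{-1},t^{-3})$ with $t=a_{22}$ and sign fixed by $a_{11}$, this element is exactly $d(a_{11},a_{22})$. Note also that $a_{22}\neq 0$, because $\det g\neq 0$, so $\alpha:=a_{12}a_{22}^{-1}$ is well defined and $n(\alpha)\in N$.

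Next I would compute the product $n(\alpha)\,d(a_{11},a_{22})$. Right multiplication by a diagonal matrix multiplies the $j$-th column by $a_{jj}$. So the $(i,j)$ entry of the product is $\frac{\alpha^{j-i}}{(j-i)!}\,a_{jj}$ for $j\ge i$, and $0$ for $j<i$. Substituting $\alpha=a_{12}/a_{22}$ entry by entry gives the following:
\begin{align*}
(1,2)&:\ \alpha a_{22}=a_{12}, \qquad
(1,3):\ \tfrac{\alpha^2}{2}a_{33}=\tfrac{a_{12}^2a_{33}}{2a_{22}^2}, \qquad
(1,4):\ \tfrac{\alpha^3}{6}a_{44}=\tfrac{a_{12}^3a_{44}}{6a_{22}^3},\\
(2,3)&:\ \alpha a_{33}=\tfrac{a_{12}a_{33}}{a_{22}}, \qquad
(2,4):\ \tfrac{\alpha^2}{2}a_{44}=\tfrac{a_{12}^2a_{44}}{2a_{22}^2}, \qquad
(3,4):\ \alpha a_{44}=\tfrac{a_{12}a_{44}}{a_{22}}.
\end{align*}
The diagonal entries are $a_{ii}$. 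This reproduces every entry of $g$, so $g=n(a_{12}a_{22}^{-1})\,d(a_{11},a_{22})$.

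There is no real obstacle here. The only point requiring care is checking that the diagonal part genuinely lies in $D$, including the determinant condition, and that it agrees with the notation $d(a_{11},a_{22})$. This amounts to observing that $a_{11}$ and $a_{22}$ determine $a_{33}=a_{22}^2/a_{11}$ and $a_{44}=a_{22}a_{33}/a_{11}$.
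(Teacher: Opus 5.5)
Your proof is correct and follows the same route as the paper: you compute $n(\alpha)\,d(a_{11},a_{22})$ entry by entry and substitute $\alpha=a_{12}a_{22}^{-1}$. Your extra checks, that $a_{22}\neq 0$ and that the diagonal part satisfies the determinant condition defining $D$, are details the paper leaves implicit.
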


\begin{proof}
Set $\alpha=a_{12}a_{22}^{-1}$. A direct multiplication gives
\[
n(\alpha)d(a_{11},a_{22})=
\left(
\begin{array}{cccc}
a_{11}&\alpha a_{22}&\frac{\alpha^2a_{33}}{2}&\frac{\alpha^3a_{44}}{6}\\
0&a_{22}&\alpha a_{33}&\frac{\alpha^2a_{44}}{2}\\
0&0&a_{33}&\alpha a_{44}\\
0&0&0&a_{44}
\end{array}
\right).
\]
Substituting $\alpha=a_{12}a_{22}^{-1}$ yields exactly the matrix $g$. Hence $g=n(a_{12}a_{22}^{-1})d(a_{11},a_{22})$.
\end{proof}

This means that we have the following decomposition for $Aut(\mathcal{F}_{\omega})$: 

\begin{align*}
&Aut(\mathcal{F}_{\omega})=\Bigg\{
\left(\begin{array}{cccc}
1&\alpha&\frac{\alpha^2}{2!}&\frac{\alpha^3}{3!} \\
0&1&\alpha&\frac{\alpha^2}{2!}\\
0&0&1&\alpha\\
0&0&0&1
\end{array}\right)\left(\begin{array}{cccc}
a_{11}&0&0&0 \\
0&a_{22}&0&0\\
0&0&a_{33}&0\\
0&0&0&a_{44}
\end{array}\right) :  \\ \quad &\hspace{3cm} \alpha \in \C, a_{22}^2=a_{11}a_{33}, a_{22}a_{33}=a_{11}a_{44} \Bigg\}.
\end{align*}

Now, consider the following action of $D$ on $N$:

\begin{align*}
     D \times N &\to N\\
    \Big(d(a_{11},a_{22}), n(\alpha)\Big) &\mapsto d(a_{11},a_{22})n(\alpha)d(a_{11},a_{22})^{-1}=n(\alpha a_{11}a_{22}^{-1}).
    \end{align*}

With the above discussion, we obtain the following result, which provides an isomorphism between the automorphism group of the exceptional foliation and a semidirect product of two subgroups.

\begin{proposition}\label{decompositionG}
The following map is an isomorphism of algebraic groups between  the semidirect product of $N$ and $D$ and the automorphism group the exceptional foliation:

\begin{align*}
\Phi: N \rtimes D &\to Aut(\mathcal{F}_{\omega})\\
\Big(n(\alpha),d(a_{11},a_{22})\Big) &\mapsto n(\alpha)d(a_{11},a_{22}),
\end{align*}

\noindent where the product in the semidirect product is:

$$\Big(n(\alpha),d(a_{11},a_{22})\Big) \star \Big(n(\beta),d(b_{11},b_{22})\Big)=\Big(n(\alpha)d(a_{11},a_{22})n(\beta)d(a_{11},a_{22})^{-1},d(a_{11},a_{22})d(b_{11},b_{22})\Big).$$
\end{proposition}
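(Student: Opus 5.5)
The plan is to prove Proposition~\ref{decompositionG} as an instance of the standard internal semidirect product criterion. It suffices to check three things: $N$ and $D$ are closed subgroups of $Aut(\mathcal{F}_{\omega})$; $D$ normalizes $N$ by the conjugation formula above; and the multiplication map $N\times D\to Aut(\mathcal{F}_{\omega})$ is bijective. Then $\Phi$ is a bijective morphism of varieties compatible with the given product $\star$, and we exhibit an explicit regular inverse.

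First, $N$ and $D$ lie in $Aut(\mathcal{F}_{\omega})$. For $D$, take $a_{12}=0$ in the description of $Aut(\mathcal{F}_{\omega})$. For $N$, take $a_{11}=a_{22}=a_{33}=a_{44}=1$; this satisfies both relations and has determinant one. Both are closed subgroups of $SL_4(\C)$. The relation $n(\alpha)n(\beta)=n(\alpha+\beta)$ holds because $n(\alpha)=\exp(\alpha J)$ with $J$ the nilpotent shift matrix, and $D$ is visibly closed under products. Next I verify the formula $d\,n(\alpha)\,d^{-1}=n(\alpha a_{11}a_{22}^{-1})$ entrywise. The $(i,j)$ entry of the left side is $a_{ii}a_{jj}^{-1}\alpha^{j-i}/(j-i)!$. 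With $(a_{11},a_{22},a_{33},a_{44})=(\epsilon t^3,t,\epsilon t^{-1},t^{-3})$ and $\epsilon=\pm1$, we get $a_{ii}/a_{i+1,i+1}=\epsilon t^2$ for every $i$. Hence $a_{ii}a_{jj}^{-1}=(\epsilon t^2)^{j-i}$ and $a_{11}a_{22}^{-1}=\epsilon t^2$, which gives the formula. A direct check then shows that $\Phi$ is a homomorphism:
\[
\Phi(n(\alpha),d)\Phi(n(\beta),d')=n(\alpha)\,(d\,n(\beta)\,d^{-1})\,d\,d'=\Phi\big((n(\alpha),d)\star(n(\beta),d')\big).
\]

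Surjectivity is exactly the preceding lemma, which writes every $g\in Aut(\mathcal{F}_{\omega})$ as $n(a_{12}a_{22}^{-1})d(a_{11},a_{22})$. For injectivity, note that $N\cap D=\{I\}$, since a unipotent diagonal matrix is the identity. Consequently, if $n(\alpha)d=n(\beta)d'$, then $n(\beta)^{-1}n(\alpha)=d'd^{-1}\in N\cap D$, so $\alpha=\beta$ and $d=d'$.

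It remains to show that $\Phi^{-1}$ is a morphism, so that $\Phi$ is an isomorphism of algebraic groups and not just a bijective homomorphism; in characteristic zero this would also follow from Zariski's main theorem. The inverse is $g\mapsto\big(n(g_{12}g_{22}^{-1}),\mathrm{diag}(g_{11},g_{22},g_{33},g_{44})\big)$. This is regular because $g_{22}\neq0$ on $Aut(\mathcal{F}_{\omega})$. The only delicate point is bookkeeping: $D$ has two connected components (the sign $\epsilon$), and one must check that the diagonal of $g$ indeed lies in $D$. This holds because the relations defining $D$ are exactly those imposed in $Aut(\mathcal{F}_{\omega})$, together with $\det g=g_{11}g_{22}g_{33}g_{44}=1$. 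I expect no real obstacle; the main work is the entrywise conjugation check carried out above.
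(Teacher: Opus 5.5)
Your proposal is correct and takes the same route as the paper. The paper obtains the proposition directly from the factorization lemma $g=n(a_{12}a_{22}^{-1})\,d(a_{11},a_{22})$, which gives surjectivity, and from the conjugation formula $d\,n(\alpha)\,d^{-1}=n(\alpha a_{11}a_{22}^{-1})$, which gives compatibility with $\star$; you additionally make explicit what the paper leaves implicit: the entrywise check that $a_{ii}/a_{i+1,i+1}$ is constant on $D$, injectivity via $N\cap D=\{I\}$, and regularity of the inverse $g\mapsto\bigl(n(g_{12}g_{22}^{-1}),\mathrm{diag}(g_{11},g_{22},g_{33},g_{44})\bigr)$.
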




To conclude this section, we recall the following previously known result.
Since $\dim Aut(\mathcal{F}_{\omega})=2$, we obtain:

\begin{corollary} The exceptional component $\overline{SL_4(\C) \cdot \omega}$ of $\mathbb{F}(2,\mathbb{P}^3)$ has dimension 13.
\end{corollary}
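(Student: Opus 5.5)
The corollary follows from the orbit–stabilizer principle for algebraic group actions, applied to the action of $SL_4(\C)$ on $\mathbb{P}(H^0(\mathbb{P}^3,\Omega^1_{\mathbb{P}^3}(4)))$. Let $G_\omega\subset SL_4(\C)$ be the stabilizer of the point $[\omega]$. By Proposition~\ref{decompositionG}, $G_\omega=Aut(\mathcal{F}_\omega)$. The orbit map $SL_4(\C)\to SL_4(\C)\cdot[\omega]$, $g\mapsto g\cdot[\omega]$, induces a bijective morphism $SL_4(\C)/G_\omega\to SL_4(\C)\cdot[\omega]$. Hence
\[
\dim SL_4(\C)\cdot[\omega]=\dim SL_4(\C)-\dim G_\omega .
\]
Since we work in characteristic zero, the orbit map is separable, and this is a genuine isomorphism onto the orbit. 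The orbit is locally closed, as any orbit of an algebraic group action is, so it is open and dense in its Zariski closure. Therefore $\dim \overline{SL_4(\C)\cdot\omega}=\dim SL_4(\C)\cdot[\omega]$.

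Next I compute $\dim G_\omega$. By Proposition~\ref{decompositionG}, $G_\omega\cong N\rtimes D$. The group $N=\{n(\alpha):\alpha\in\C\}$ is a one-dimensional unipotent group isomorphic to $(\C,+)$, since $n(\alpha)n(\beta)=n(\alpha+\beta)$ (it is the exponential of a nilpotent matrix). The group $D$ consists of the matrices $(\pm t^3,t,\pm t^{-1},t^{-3})$ with $t\in\C^*$. It is therefore a union of two copies of a one-dimensional torus, so $\dim D=1$. As a variety the semidirect product is $N\times D$, so $\dim G_\omega=2$. Together with $\dim SL_4(\C)=15$ this gives
\[
\dim \overline{SL_4(\C)\cdot\omega}=15-2=13.
\]

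The only real obstacle is to check that the stabilizer of the point $[\omega]$ for the $SL_4(\C)$-action really equals the group described in the earlier proposition. The relevant subtlety is the finite kernel $\mu_4$ acting by scalars. This kernel does not change dimension, but it must be contained in the stabilizer. It is, because the scalar matrix $\zeta I$ with $\zeta^4=1$ lies in $D$: take $t=\zeta$, and note $\zeta^3=\zeta^{-1}$ together with the appropriate sign. I take that proposition as given. I also note that the result is independent of whether one uses $SL_4(\C)$ or $PGL_4(\C)$, since both have dimension $15$ and the group map between them has finite fibres.
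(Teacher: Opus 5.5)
Your proposal is correct and follows the paper's own argument: the corollary is orbit–stabilizer, $\dim SL_4(\C)-\dim Aut(\mathcal{F}_{\omega})=15-2=13$, with $\dim Aut(\mathcal{F}_{\omega})=2$ coming from $N\rtimes D$. One small citation point: the identification of the stabilizer with $Aut(\mathcal{F}_{\omega})$ is given by the first (unlabeled) proposition of Section 3, not by Proposition~\ref{decompositionG}, though this does not affect the argument.
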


\section{The Geometry of the Boundary of the Exceptional Component}

Let $G=Aut(\mathcal{F}_{\omega})$ and let $P$ be the subgroup of upper triangular matrices in $SL_4(\C)$, then $G$ is an algebraic subgroup of $P$. We know that, as an algebraic variety, there exists the following isomorphism:
$$SL_4(\C)\cdot \omega \cong SL_4(\C)/G.$$

\noindent and 

$$\overline{SL_4(\C)\cdot \omega} \cong \overline{SL_4(\C)/G}.$$

So, to study the boundary of the exceptional component we are going to study the algebraic variety $\overline{SL_4(\C)/G}$.
\\

Since $G\subset P\subset SL_4(\mathbb C)$, the quotient
$SL_4(\mathbb C)/G$ is the fiber bundle associated to the principal
$P$-bundle $SL_4(\mathbb C)\to SL_4(\mathbb C)/P$ with fiber $P/G$.
Hence
\[
SL_4(\mathbb C)/G \simeq SL_4(\mathbb C)\times_P(P/G),
\]
see, for example, \cite[Section~6]{borel}.
Recall that  $SL_4(\mathbb{C}) \times_P (P/G)$ are the classes in $\Big(SL_4(\mathbb{C}) \times (P/G)\Big)$ considering the action by $P$ given by:

\begin{align*}
   \Big(SL_4(\mathbb{C}) \times (P/G)\Big) \times P &\to SL_4(\mathbb{C}) \times (P/G)\\
    ((h,p_1G),p) &\mapsto (hp,p^{-1}p_1G)
\end{align*}

 Then the isomorphism from $SL_4(\mathbb{C}) \times_P (P/G)$ to $SL_4(\mathbb{C})/G$ sends $[(h,pG)]$ to $hpG$.

 The closure $\overline{SL_4(\mathbb{C})/G}$ is understood in the following sense. 
We consider $SL_4(\mathbb{C})/G$ as a locally closed subvariety of a fixed algebraic variety in which it is naturally embedded. Since
\[
SL_4(\mathbb{C})/G \simeq SL_4(\mathbb{C}) \times_P (P/G),
\]
\noindent we then define
\[
\overline{SL_4(\mathbb{C})/G}
\]
as the Zariski closure of $SL_4(\mathbb{C})/G$ inside the corresponding ambient space, which is compatible with the associated bundle structure. With this we obtain that the Zariski closure of $SL_4(\mathbb{C})/G$ satisfies:

\[
\overline{SL_4(\mathbb{C})/G} \;\cong\; SL_4(\mathbb{C}) \times_P \overline{P/G}.
\]

\noindent This is basically due to the fact that $P/G$ is open and dense in $\overline{P/G}$, and the equality holds locally wherever the bundle is trivial. This means that an element of $\overline{SL_4(\C)/G}$ can be viewed as the class of a pair consisting of an element in $SL_4(\C)$ and a point in the closure $\overline{P/G}$.
\\

We now formalize the above discussion in the following proposition. This will allow us to reduce the study of the boundary of the exceptional component to the study with respect to the action by the subgroup $P$.

\begin{proposition}
Let $P$ be the group of upper triangular matrices in $SL_4(\mathbb{C})$. Then, for every $\nu \in \overline{SL_4 \cdot \omega}$, there exists $g \in SL_4(\C)$ such that
\[
g \cdot \nu \in \overline{P \cdot \omega}.
\]
\end{proposition}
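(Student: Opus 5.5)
The plan is the standard Borel-type argument. Here $P$ is a Borel subgroup of $SL_4(\C)$, so the flag variety $SL_4(\C)/P$ is projective. Consider the incidence variety
\[
Z=\big\{(hP,\nu)\in SL_4(\C)/P\times \mathbb{P}(H^0(\mathbb{P}^3,\Omega^1_{\mathbb{P}^3}(4))) : h^{-1}\cdot\nu\in\overline{P\cdot\omega}\big\}.
\]
The condition $h^{-1}\cdot \nu\in \overline{P\cdot \omega}$ depends only on the coset $hP$, since $\overline{P\cdot\omega}$ is $P$-stable (the closure of a $P$-stable set is $P$-stable). Hence $Z$ is well defined.

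The first step is to show that $Z$ is Zariski closed. Work in the product $SL_4(\C)\times\mathbb{P}(\cdots)$, where the set $\{(h,\nu): h^{-1}\cdot\nu\in\overline{P\cdot\omega}\}$ is closed, being the preimage of a closed set under the morphism $(h,\nu)\mapsto h^{-1}\cdot\nu$. This set is saturated for the right $P$-action on the first factor. The quotient map $SL_4(\C)\times\mathbb{P}\to SL_4(\C)/P\times\mathbb{P}$ is open, being a locally trivial $P$-bundle, so the image $Z$ of a closed saturated set is closed. Equivalently, $Z$ is the associated bundle $SL_4(\C)\times_P\overline{P\cdot\omega}$ discussed before the statement, embedded via $[(h,\nu')]\mapsto(hP,h\cdot\nu')$.

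The second step is to project $Z$ to the second factor. Because $SL_4(\C)/P$ is complete, the projection $\pi_2:SL_4(\C)/P\times\mathbb{P}\to\mathbb{P}$ is a closed map, so $\pi_2(Z)=SL_4(\C)\cdot\overline{P\cdot\omega}$ is closed. This image contains $SL_4(\C)\cdot\omega$, and it is contained in $\overline{SL_4(\C)\cdot\omega}$, because $\overline{P\cdot\omega}\subset\overline{SL_4(\C)\cdot\omega}$ and the latter is $SL_4(\C)$-stable. Therefore
\[
\overline{SL_4(\C)\cdot\omega}=SL_4(\C)\cdot\overline{P\cdot\omega}.
\]
So any $\nu$ in the closure can be written as $\nu=h\cdot\nu'$ with $\nu'\in\overline{P\cdot\omega}$, and $g=h^{-1}$ gives the statement.

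The only delicate point is the closedness of $Z$, i.e.\ making precise the informal identification $\overline{SL_4(\C)/G}\cong SL_4(\C)\times_P\overline{P/G}$ stated above. The properness of $SL_4(\C)/P$ is what does the real work, and it is the essential input: without completeness of the flag variety the orbit $SL_4(\C)\cdot\overline{P\cdot\omega}$ need not be closed. Closures are taken in the projective space $\mathbb{P}(H^0(\mathbb{P}^3,\Omega^1_{\mathbb{P}^3}(4)))$ of all twisted 1-forms, so the statement is about closures there.
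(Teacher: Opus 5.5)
Your proof is correct. It is built on the same object as the paper's argument, the associated bundle $SL_4(\C)\times_P\overline{P\cdot\omega}$, but the decisive step is different.

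\textbf{The paper's route.} The paper asserts $\overline{SL_4(\C)/G}\cong SL_4(\C)\times_P\overline{P/G}$. Its only justification is that $P/G$ is dense in $\overline{P/G}$ and that the bundle over $SL_4(\C)/P$ is locally trivial. It never uses that $P$ is parabolic. That reasoning cannot suffice on its own: the same words with $G$ in place of $P$ would give $\overline{SL_4(\C)/G}=SL_4(\C)/G$. This is false, because the exceptional component has a nonempty boundary.

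\textbf{Your route.} You realize the bundle as the closed incidence set $Z\subset SL_4(\C)/P\times\mathbb{P}(H^0(\mathbb{P}^3,\Omega^1_{\mathbb{P}^3}(4)))$. You then use completeness of the flag variety to conclude that $\pi_2(Z)=SL_4(\C)\cdot\overline{P\cdot\omega}$ is closed, hence equal to $\overline{SL_4(\C)\cdot\omega}$. This properness argument is exactly the ingredient the paper's local-triviality justification lacks. It is the standard fact that $G\cdot Y$ is closed for a closed $B$-stable subset $Y$.

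\textbf{What each approach buys.} Your argument establishes precisely what the proposition needs: surjectivity of the proper map $SL_4(\C)\times_P\overline{P\cdot\omega}\to\overline{SL_4(\C)\cdot\omega}$. The paper claims more, namely that this map is an isomorphism, and uses that in its corollary on the singular set. Neither its sketch nor your argument proves that stronger claim, but the proposition does not require it.
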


From now on, we focus on studying the algebraic variety $\overline{P/G}$ in order to describe the foliations appearing in the boundary of the exceptional component.
We begin by recalling the well-known Levi decomposition of the group $P$ of upper triangular matrices. Together with the decomposition of the automorphism group $G$ of the exceptional foliation established in the previous section, this description will play a central role in our analysis.
\\

Recall that if

\[
p=
\begin{pmatrix}
a_1 & x_{12} & x_{13} & x_{14} \\
0 & a_2 & x_{23} & x_{24} \\
0 & 0 & a_3 & x_{34} \\
0 & 0 & 0 & a_4
\end{pmatrix}
\in P,
\]

\noindent then

\[
p=
\begin{pmatrix}
1 &
\frac{x_{12}}{a_2} &
\frac{x_{13}}{a_3} &
\frac{x_{14}}{a_4}
\\
0 & 1 &
\frac{x_{23}}{a_3} &
\frac{x_{24}}{a_4}
\\
0 & 0 & 1 &
\frac{x_{34}}{a_4}
\\
0 & 0 & 0 & 1
\end{pmatrix}
\;
\begin{pmatrix}
a_1&0&0&0\\
0&a_2&0&0\\
0&0&a_3&0\\
0&0&0&a_4
\end{pmatrix}.
\]

\noindent With this we obtain the well known Levi decomposition of $P$, which is the semidirect product:
\[
P = U \rtimes T,
\]
where:
\begin{itemize}
\item The unipotent radical $U$ consists of the strictly upper triangular matrices:
    \[
    U = \left\{
    \begin{pmatrix}
    1 & * & * & * \\
    0 & 1 & * & * \\
    0 & 0 & 1 & * \\
    0 & 0 & 0 & 1
    \end{pmatrix}
    \right\}.
    \]  
\item The subgroup $T$ given by the diagonal torus of $SL_4(\mathbb{C})$:
    \[
    T = \left\{
    \begin{pmatrix}
    a_1 & 0 & 0 & 0 \\
    0 & a_2 & 0 & 0 \\
    0 & 0 & a_3 & 0 \\
    0 & 0 & 0 & a_4
    \end{pmatrix}
    \; : \; a_1 a_2 a_3 a_4 = 1
    \right\}.
    \]    
\end{itemize}

\noindent And the action of $T$ on $U$ is by conjugation:

\begin{align*}(a_1,a_2,a_3,a_4)\begin{pmatrix}
    1 & x_{12} & x_{13} & x_{14} \\
    0 & 1 & x_{23} & x_{24} \\
    0 & 0 & 1 & x_{34} \\
    0 & 0 & 0 & 1
    \end{pmatrix}(a_1,a_2,a_3,a_4)^{-1}=\\
    \begin{pmatrix}
    1 & \frac{a_1}{a_2}x_{12} & \frac{a_1}{a_3}x_{13} & \frac{a_1}{a_4}x_{14} \\
    0 & 1 & \frac{a_2}{a_3}x_{23} & \frac{a_2}{a_4}x_{24} \\
    0 & 0 & 1 & \frac{a_3}{a_4}x_{34} \\
    0 & 0 & 0 & 1
    \end{pmatrix},
    \end{align*}

\noindent then for $(u_1,t_1), (u_2,t_2) \in U \rtimes T$, $(u_1,t_1)(u_2,t_2)=(u_1t_1u_2t_1^{-1},t_1t_2)$.
\\

Recall that $G=N \rtimes D$ (see proposition \ref{decompositionG}), therefore, we have the following isomorphism of algebraic varieties:

    \begin{align*}
    P/G=(U \rtimes T)/(N \rtimes D) &\to (U/N) \times (T/D)\\
    (u,t)(N \rtimes D) &\mapsto (uN,tD),
    \end{align*}

\noindent and the action of $P$ on $(U/N) \times (T/D)$ is given by:

\begin{align*}
    (U \rtimes T) \times \Big((U/N) \times (T/D)\Big) &\to (U/N) \times (T/D)\\
    \Big((u,t),(u_1N,t_1D)\Big) &\mapsto (utu_1t^{-1}N,tt_1D).
    \end{align*}

\noindent  We have that $U/N$ is the quotient of $\C^6$ by a linear $\C$, so $U/N$ is isomorphic to $\C^5$. We can construct an explicit isomorphism:

\begin{align*}
&U/N \to \C^5\\
&\begin{pmatrix}
1&x_{12}&x_{13}&x_{14}\\
0&1&x_{23}&x_{24}\\
0&0&1&x_{34}\\
0&0&0&1
\end{pmatrix}N \mapsto \\
&\Big(x_{13}-\frac{x_{12}^2}{2},x_{14}-x_{12}x_{13}+\frac{x_{12}^3}{3},x_{23}-x_{12},x_{24}-x_{12}x_{23}+\frac{x_{12}^3}{2},x_{34}-x_{12}\Big).
\end{align*}

On the other hand, let $C_2$ be the cyclic group $\{1,-1\}$ and $(\C^*)^2/C_2$ be the quotient by the action $C_2 \times (\C^*)^2 \mapsto (\C^*)^2, (\pm 1, (a,b)) \mapsto (\pm a, \pm b)$. We can see that the following:

\begin{align*}
 T/D=\Big\{(a_1,a_2,a_3,a_4)D:(a_1,a_2,a_3,a_4) \in T\Big\} &\to (\C^*)^2/C_2,\\
 (a_1,a_2,a_3,a_4)D &\mapsto [(a_1a_2^{-3},a_2a_3)]
\end{align*}
 
\noindent  is an isomorphism. For that, we can take the element in the class of the diagonal matrix $(a_1,a_2,a_3,a_4)$ such that $a_{22}=a_2^{-1}$, with this we obtain 

\begin{equation*}
(a_1,a_2,a_3,a_4)d(a_{11},a_{22})=(\pm a_1a_2^{-3},1,\pm a_2a_3,a_1^{-1}a_3^{-1}a_2^2).   
\end{equation*}
 
\noindent Therefore, $(a_1a_2^{-3},1, a_2a_3,a_1^{-1}a_3^{-1}a_2^2)$ and $(- a_1a_2^{-3},1,-a_2a_3,a_1^{-1}a_3^{-1}a_2^2)$ are in the same class. 
\\

We now describe the affine variety associated with the quotient $(\mathbb{C}^*)^2/C_2$. For that, we define:

\begin{align*}
    (\C^*)^2/C_2 &\to V=\mathbb{V}_{Aff}(uw-v^2) \subset \mathbb{C}^3\\
    [(a,b)] &\mapsto (a^2,ab,b^2)=(u,v,w).
\end{align*}

\noindent This is an injective algebraic morphism, and the image is 
$V-\big( L_u\cup L_w \big)$, where $L_u=\{(0,0,b): b \in \C\}$ and  $L_w=\{(a,0,0): a \in \C\}$. Therefore, $T/D$ is isomorphic to $V-\big( L_u\cup L_w \big)$.
\\

It is important to mention that neither of these two isomorphisms is canonical, since they depend on a special choice of representatives of the classes.
However, we obtain the following isomorphism for the orbit $P \cdot \omega$:

$$P \cdot \omega \cong P/G \cong \mathbb{A}^5 \times \Big(V-\{(a,0,b): ab=0\}\Big).$$
\\

Hence, the projective compactification of $P/G$ is
\[
\overline{P/G} \;\cong\; \mathbb{P}^5 \times \mathbb{V}(uw-v^2)
\subset \mathbb{P}^5 \times \mathbb{P}^3,
\]
where $\mathbb{V}(uw-v^2)\subset\mathbb{P}^3$ is a quadric cone.
The explicit description of this variety allows us to determine the
irreducible components of the boundary of $\overline{P/G}$, which we
summarize in the following theorem.

\begin{theorem} \label{boundary} Let $\mathcal{F}_{\omega}$ the exceptional foliation, and $P$ be the group of upper triangular matrices, then $\overline{P \cdot \omega}$ is isomorphic to the projective algebraic variety $\mathbb{P}^5 \times \mathbb{V}(uw-v^2)$.
\\

The boundary of $\overline{P \cdot \omega}$ has $4$ irreducible components, which are isomorphic to the following algebraic varieties:

 \begin{align*}
 Z_1&=\mathbb{P}^5 \times \overline{L_u}\\
 Z_2&=\mathbb{P}^5 \times C_{\infty}\\
     Z_3 &=\mathbb{P}^5 \times \overline{L_w} \\
     Z_4&=H_{\infty} \times \mathbb{V}(uw-v^2)
 \end{align*}

\noindent where $C_{\infty}=\{(u:v:w:0) \in \mathbb{P}^3: uw=v^2\}$, $\overline{L_u}=\{(0:0:w:z) \in \mathbb{P}^3\},$ $\overline{L_w}=\{(u:0:0:z) \in \mathbb{P}^3\}$ and $H_{\infty}$ is the hyperplane to infinity of $\mathbb{P}^5$, that is, the complement of the standard affine chart $\mathbb{A}^5$. 

\end{theorem}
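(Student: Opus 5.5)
The plan is to combine the explicit identification $P\cdot\omega \cong P/G \cong \mathbb{A}^5 \times \big(V-(L_u\cup L_w)\big)$ obtained above with a choice of compactification. The boundary is then simply the complement of the open orbit inside that compactification.

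\textbf{Step 1: the compactification.} Take $\mathbb{A}^5 \subset \mathbb{P}^5$ as the standard affine chart, with complement the hyperplane $H_\infty$. Take $V=\mathbb{V}_{Aff}(uw-v^2)\subset \mathbb{C}^3$ as the affine chart $\{z\neq 0\}$ of the projective cone $\mathbb{V}(uw-v^2)\subset\mathbb{P}^3$, in coordinates $(u:v:w:z)$.

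Since $V$ is irreducible of dimension $2$, the projective closure of $V-(L_u\cup L_w)$ is the whole projective cone. So $\mathbb{P}^5\times\mathbb{V}(uw-v^2)$ is an irreducible projective variety of dimension $7$, containing the orbit as a dense open subset.

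To identify this with $\overline{P\cdot\omega}$ I would argue that the orbit map $P/G \to \mathbb{P}(H^0(\mathbb{P}^3,\Omega^1(4)))$ extends to this compactification. The expected obstacle is precisely here: the isomorphisms of $U/N$ and $T/D$ were not canonical. I would first try to check, using the weight decomposition of Table~\ref{tablaPesos}, that the coefficients of $p\cdot\omega$ are polynomial in the chosen coordinates $(x,(u,v,w))$. If that works, the closure is their projective closure. As the paper indicates, I would otherwise accept $\overline{P/G}$ as the closure inside this ambient space.

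\textbf{Step 2: the boundary as a set.} A point $(x,q)\in\mathbb{P}^5\times\mathbb{V}(uw-v^2)$ lies outside the orbit exactly when one of the following holds:
\begin{itemize}
\item $x\in H_\infty$;
\item $q$ has $z=0$, i.e.\ $q\in C_\infty$;
\item $q$ lies in $\overline{L_u}$, the closure of $\{u=v=0\}$, given in $\mathbb{P}^3$ by $(0:0:w:z)$;
\item $q$ lies in $\overline{L_w}$, the closure of $\{v=w=0\}$, given by $(u:0:0:z)$.
\end{itemize}
Here one uses that on the cone, $uw=0$ forces $v=0$. Hence the boundary is $Z_1\cup Z_2\cup Z_3\cup Z_4$.

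\textbf{Step 3: irreducibility and irredundancy.} Each factor is irreducible:
\begin{itemize}
\item $\overline{L_u}$ and $\overline{L_w}$ are lines;
\item $C_\infty$ is a smooth conic in the plane $z=0$;
\item $H_\infty\cong\mathbb{P}^4$;
\item the cone $\mathbb{V}(uw-v^2)$ is irreducible because $uw-v^2$ is irreducible.
\end{itemize}
Products of irreducible varieties are irreducible, so each $Z_i$ is irreducible of dimension $6$, a divisor in the $7$-dimensional variety.

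Since all four $Z_i$ have the same dimension and are pairwise distinct, none is contained in another. For instance, $Z_1\cap Z_3=\mathbb{P}^5\times\{(0:0:0:1)\}$, so $Z_1$ and $Z_3$ differ, and the other pairs are distinguished similarly. Therefore they are exactly the $4$ irreducible components of the boundary.
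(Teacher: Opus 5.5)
\textbf{Verdict: genuine gap.} Your Step~1 is the entire content of the theorem, and it is not proved. The object $\overline{P\cdot\omega}$ is the closure of the orbit inside $\mathbb{P}(H^0(\mathbb{P}^3,\Omega^1_{\mathbb{P}^3}(4)))$. Your Steps 2 and 3 correctly compute the boundary of the \emph{model} $\mathbb{P}^5\times\mathbb{V}(uw-v^2)$, but nothing links the model to the actual orbit closure. Your fallback (``accept $\overline{P/G}$ as the closure'') is the same unsupported ``Hence, the projective compactification of $P/G$ is \dots'' that the paper uses, so your argument and the paper's break at the same point. An abstract isomorphism $P\cdot\omega\cong\mathbb{A}^5\times(V-(L_u\cup L_w))$ says nothing about how the orbit closes up in the ambient space. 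That isomorphism is also non-canonical, and as written not even well defined, since $T$ does not normalize $N$. Your proposed check would not settle the matter either: even if $p\cdot\omega$ is polynomial in your coordinates, the induced map on $\mathbb{P}^5\times\mathbb{V}(uw-v^2)$ can have base points and can contract divisors.

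\textbf{Failure on the torus slice.} Write $t\cdot\omega=\sum c_i\omega_i$ with $c_i=a_i/a_{i+1}$. Because the simple roots are independent characters of $T$, $T\cdot[\omega]=\{[c_1\omega_1+c_2\omega_2+c_3\omega_3]:c_i\neq0\}$. Its closure is therefore the smooth plane $\mathbb{P}(\langle\omega_1,\omega_2,\omega_3\rangle)$, whereas your model predicts $\{0\}\times\mathbb{V}(uw-v^2)$, which is singular at the vertex. In the coordinates $(u,v,w)=(a_1^2a_2^{-6},a_1a_2^{-2}a_3,a_2^2a_3^2)$ one has $v=c_1/c_2$ and $w=c_3/c_1$. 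On this slice the orbit map is therefore $(u:v:w:z)\mapsto[vz\,\omega_1+z^2\omega_2+vw\,\omega_3]$. This map contracts $C_\infty$ to $[\omega_3]$, contracts $\overline{L_u}$ and $\overline{L_w}$ to $[\omega_2]$, and is undefined at $(1:0:0:0)$ and $(0:0:1:0)$. So $Z_1,Z_2,Z_3$ are not the divisors $\overline{P\cdot(\omega_i+\omega_j)}$.

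\textbf{A fifth boundary component.} Put $f=z_1z_4^2-z_2z_3z_4+\frac{z_3^3}{3}$ and $g=z_2z_4-\frac{z_3^2}{2}$, so that $\omega=(2g\,df-3f\,dg)/z_4$. Let $q_\epsilon\in P$ have diagonal $(\epsilon^{-1},\epsilon^{1/2},1,\epsilon^{1/2})$, $(2,3)$-entry $\epsilon^{-1/2}$, and all other off-diagonal entries $0$. Then $g\circ q_\epsilon\to z_3z_4-\frac{z_3^2}{2}$ and $f\circ q_\epsilon\to z_1z_4^2-z_3^2z_4+\frac{z_3^3}{3}$. Hence $q_\epsilon^*\omega\in P\cdot\omega$ tends to the $z_2$-independent foliation $\omega_0'$ with first integral $\frac{(z_1z_4^2-z_3^2z_4+z_3^3/3)^2}{(z_3z_4-z_3^2/2)^3}$. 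Its weights lie on the face $x-y+z=4$, which the paper discards as too singular. In fact its coefficients have no common factor, so it is a genuine foliation. Up to finite index, its stabilizer in $P$ is $\{q_{11}=q_{33}=q_{44},\ q_{23},q_{24}\ \text{free},\ \text{other off-diagonal entries } 0\}$, which has dimension $3$. So $\overline{P\cdot\omega_0'}$ is a $6$-dimensional boundary component. It differs from the three logarithmic components, because $\omega_0'$ is a linear pull-back and no $\omega_i+\omega_j$ is. It also differs from the $z_1$-independent pull-back family, because $P$ preserves $z_1$-independence and $\omega_0'$ involves $z_1$. The boundary therefore has at least five components. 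The statement itself fails, so this route cannot be completed.
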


The above description of $\overline{P \cdot \omega}$ also allows us to determine its singular set, as stated in the following corollary.

\begin{corollary}
    The singular set of $\overline{P \cdot \omega}$ is isomorphic to $\mathbb{P}^5 \times \{(0:0:0:1)\}$. In particular, the dimension of the singular set of the exceptional component $\overline{E}$ is $11$.
\end{corollary}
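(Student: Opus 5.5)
The corollary should follow from Theorem \ref{boundary}, which identifies $\overline{P\cdot\omega}$ with $\mathbb{P}^5\times\mathbb{V}(uw-v^2)$. The plan is to compute the singular locus of this product and then transport the answer to $\overline{E}$ through the bundle description $\overline{SL_4(\C)/G}\cong SL_4(\C)\times_P\overline{P/G}$.

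\textbf{Singular locus of the model.} Since $\mathbb{P}^5$ is smooth, a point $(x,q)$ of the product is singular if and only if $q$ is a singular point of the cone $Q=\mathbb{V}(uw-v^2)\subset\mathbb{P}^3$ with coordinates $(u:v:w:z)$. The Jacobian of $uw-v^2$ is $(w,-2v,u,0)$. It vanishes exactly when $u=v=w=0$, which gives the single point $(0:0:0:1)$, the vertex of the cone. At every other point of $Q$ the gradient is nonzero, so $Q$ is a smooth surface there. At the vertex, the affine chart $z=1$ shows $Q$ is locally the quadric cone $uw=v^2$ in $\C^3$, i.e.\ the $A_1$ singularity $\C^2/\{\pm1\}$. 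This is genuinely singular: its Zariski tangent space is $3$-dimensional while $\dim Q=2$. Hence $\mathrm{Sing}(\overline{P\cdot\omega})\cong\mathbb{P}^5\times\{(0:0:0:1)\}$.

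\textbf{Passing to $\overline{E}$.} Over the open cells $gU^-P\subset SL_4(\C)$ the associated bundle is trivial, so $\overline{E}$ is locally isomorphic to $(\text{open subset of } SL_4(\C)/P)\times\overline{P/G}$. The flag variety $SL_4(\C)/P$ is smooth of dimension $6$. It follows that the singular locus of $\overline{E}$ is $SL_4(\C)\times_P\mathrm{Sing}(\overline{P/G})$. Its dimension is $6$ plus the dimension of $\mathrm{Sing}(\overline{P/G})$, where the latter is $\mathbb{P}^5$ times a point and so has dimension $5$. This gives $6+5=11$.

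\textbf{Main obstacle.} The delicate point is the passage from the model to $\overline{E}$. It relies on the isomorphism of Theorem \ref{boundary} holding as an isomorphism of varieties, not only set-theoretically. It also relies on the identification $\overline{SL_4/G}\cong SL_4\times_P\overline{P/G}$ being compatible with the embedding in $\mathbb{P}(H^0(\Omega^1(4)))$. Both are taken from the preceding discussion. As a sanity check, I would verify the count against the fact that the singular stratum has codimension $2$ in the $13$-dimensional $\overline{E}$, which is consistent with an $A_1$ surface singularity times a smooth $11$-dimensional factor.
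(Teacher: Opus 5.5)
This is essentially the paper's own proof. You read off $\operatorname{Sing}\bigl(\mathbb{P}^5\times\mathbb{V}(uw-v^2)\bigr)=\mathbb{P}^5\times\{(0:0:0:1)\}$ from Theorem~\ref{boundary} and transport it through $SL_4(\C)\times_P\overline{P/G}$ to get $\dim SL_4(\C)/P+5=11$. Your Jacobian/$A_1$ check and the local triviality over translates of the big cell just make explicit what the paper calls ``compatibility of the singular locus with the fiber bundle''.

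Your ``main obstacle'' paragraph correctly isolates the two inputs that the paper also uses without proof, so your argument is exactly as strong as the paper's. Be aware that the second input fails via the natural map for closures in $\mathbb{P}(H^0(\mathbb{P}^3,\Omega^1_{\mathbb{P}^3}(4)))$. By Borel's fixed point theorem, $\overline{P\cdot\omega}$ contains the $P$-fixed highest-weight point. Its stabilizer is a parabolic strictly larger than $P$, because the highest weight of this $45$-dimensional module is not regular. Hence the map $SL_4(\C)\times_P\overline{P\cdot\omega}\to\overline{E}$, $[g,x]\mapsto g\cdot x$, has a $\mathbb{P}^1$ in its fiber over that point.
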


\begin{proof}
By compatibility of the singular locus with the fiber bundle we have:
\[
\operatorname{Sing}\bigl(SL_4(\mathbb{C})\times_P \overline{P/G}\bigr)
=
SL_4(\mathbb{C})\times_P \operatorname{Sing}(\overline{P/G}).
\]

Since $\operatorname{Sing}(\overline{P/G})
\cong \mathbb{P}^5 \times \{(0:0:0:1)\}$, we obtain
\[
\dim \operatorname{Sing}\bigl(\overline{SL_4(\mathbb{C})/G}\bigr)
=
\dim(SL_4(\mathbb{C})/P) + 5=11.
\]
\end{proof}

\begin{remark} It is easy to see that every irreducible component of $\mathbb{F}(2, \mathbb{P}^3)$ is unirational and ruled. For example, to see that the exceptional component is ruled we remark that the rational first integral $F=\frac{(z_1z_4^2-z_2z_3z_4+\frac{z_3^3}{3})^2}{(z_2z_4-\frac{z_3^2}{2})^3}$ is, in some sense, linear in $z_1$. If we take the automorphisms of $\mathbb{P}^3$ given by:

$$\sigma_{t}:(z_1:z_2:z_3:z_4) \mapsto (t^3z_1:t^{-1}z_2:t^{-1}z_3:t^{-1}z_4) \qquad t \in \C^*$$

\noindent then $(F\circ \sigma_t) (z_1:z_2:z_3:z_4)=\frac{(t^4z_1z_4^2-z_2z_3z_4+\frac{z_3^3}{3})^2}{(z_2z_4-\frac{z_3^2}{2})^3}$. By considering $t \mapsto \sigma_t^{*}\mathcal{F}_{\omega}$, we construct a line in the exceptional component. Using the action of $SL_4(\C)$, we see that this component is ruled.
\end{remark}

We recall the following conjecture of Brunella, which concerns the
existence of algebraic invariant subvarieties for codimension-one
foliations on $\mathbb{P}^3$.

\begin{conjecture}
Let $\mathcal{F}$ be a codimension one holomorphic foliation on $\mathbb{P}^{3}$. Then either 

\begin{enumerate}
\item[a)] $\mathcal{F}$ admits an invariant algebraic surface, 
\item[b)] or there exists a one-dimensional foliation by algebraic curves contained in $\mathcal{F}$.
\end{enumerate}
\end{conjecture}

\noindent and according to Cerveau's result (see Proposition~1 of \cite{Cerveau2002}):

\begin{proposition}\label{Brunella}
Let $\mathcal{F}$ be a generic degree $s$ codimension one holomorphic foliation on $\mathbb{P}^{3}$ belonging to a linear pencil. Then we have one of the following alternative:

\begin{enumerate}
\item[(a')] There exists a rational closed $1$-form $\theta$ such that $d\omega=\theta\wedge\omega$ (the foliation is transversally affine).
\item[(b')] $\mathcal{F}$ satisfies (b).
\end{enumerate}
\end{proposition}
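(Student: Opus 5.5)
The plan is to work in an affine chart $\C^3\subset\mathbb{P}^3$ and to write the pencil as $\omega_t=\omega_0+t\,\omega_1$, with $\omega_t\wedge d\omega_t=0$ for every $t$. Expanding this in $t$ gives three identities:
\[
\omega_0\wedge d\omega_0=0,\qquad \omega_1\wedge d\omega_1=0,\qquad \omega_0\wedge d\omega_1+\omega_1\wedge d\omega_0=0 .
\]
If $\omega_0\wedge\omega_1\equiv 0$, the pencil is trivial: all members define the same foliation, and the statement reduces to a single foliation (a degenerate case I will treat separately or exclude). So I assume $\omega_0\wedge\omega_1\not\equiv 0$. Then the $2$-form $\omega_0\wedge\omega_1$ defines, via contraction with the volume form, a polynomial vector field $X$ with $i_X\omega_t=0$ for all $t$. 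Its orbits form a one-dimensional foliation $\mathcal{G}$ contained in every member of the pencil. The proof then splits according to the vanishing of the $3$-form $\Theta=\omega_1\wedge d\omega_0=-\omega_0\wedge d\omega_1$.

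\emph{Case $\Theta\equiv 0$.} Here $d\omega_0$ is annihilated by wedging with both $\omega_0$ and $\omega_1$. Since these are generically independent in dimension $3$, this gives $d\omega_0=f\,\omega_0\wedge\omega_1$, and likewise $d\omega_1=g\,\omega_0\wedge\omega_1$, with $f,g$ rational. For a generic member,
\[
d\omega_t=(f+tg)\,\omega_t\wedge\omega_1,
\]
so $d\omega_t=\theta_t\wedge\omega_t$ with $\theta_t=-(f+tg)\,\omega_1$. It remains to make $\theta_t$ closed. Differentiating gives $d\theta_t\wedge\omega_t=0$, so $d\theta_t=\eta\wedge\omega_t$ for some rational $\eta$. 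I would then use the two-parameter freedom $\theta\mapsto\theta+h\,\omega_t$ and the identities above (valid for all $t$) to kill $d\theta_t$ for generic $t$. This is the standard Godbillon--Vey type argument, and it yields alternative (a').

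\emph{Case $\Theta\not\equiv 0$.} Here I aim for alternative (b'), namely that the leaves of $\mathcal{G}$ are algebraic curves. The idea is that through a generic point the leaf of $\mathcal{G}$ is the intersection of the leaves of two distinct members $\mathcal{F}_{t_1}$ and $\mathcal{F}_{t_2}$. Moreover, the pencil parameter itself becomes a function: at a generic point $p$ and a direction $v$ transverse to $X(p)$, there is a unique $t$ with $\omega_t(p)(v)=0$. Using $\Theta\neq 0$, I would show that the rational function
\[
R=\frac{\omega_0(Y)}{\omega_1(Y)},
\]
for a suitable auxiliary field $Y$ built from $d\omega_0$ and $d\omega_1$, is a first integral of $X$ that is nonconstant on the leaves of each $\mathcal{F}_t$. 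Combined with the integrating factors of two members, this should produce two independent rational first integrals of $X$, hence algebraic orbits.

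\emph{Main obstacle.} The hardest step is this last one: extracting genuinely rational (not merely local or multivalued) first integrals of $X$ from the condition $\Theta\not\equiv 0$. My first attempt will be to show that $\Theta\not\equiv 0$ forces the transverse structure of $\mathcal{G}$ (a linear family of one-dimensional foliations on a $2$-dimensional transversal) to have a rational invariant. I would then apply Jouanolou/Darboux-type finiteness to conclude that infinitely many invariant surfaces appear in the pencil, so that their pairwise intersections give the algebraic curves required in (b).
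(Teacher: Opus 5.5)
The paper gives no proof of this statement; it only quotes Proposition~1 of Cerveau's 2002 paper. So your argument has to stand on its own, and it does not. Splitting according to whether $\Theta=\omega_1\wedge d\omega_0$ vanishes does not separate (a') from (b'), and each of your two cases gets the wrong alternative.

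\emph{$\Theta\equiv 0$ does not give (a').} Pull back a generic pencil of degree $s\geq 2$ foliations of $\mathbb{P}^2$ by $(z_1:z_2:z_3:z_4)\mapsto(z_2:z_3:z_4)$. In the chart $z_4=1$ both forms involve only $z_2,z_3,dz_2,dz_3$, so $\Theta\equiv 0$. On a generic plane $z_1=c$, a generic member restricts to the original foliation of $\mathbb{P}^2$, which has no invariant algebraic curve (Jouanolou). A closed rational $\theta$ with $d\omega_t=\theta\wedge\omega_t$ would restrict to a transversally affine structure on it. That is impossible: the polar curve of $\theta$, or the line at infinity, would be invariant. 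Hence your step ``use the freedom $\theta\mapsto\theta+h\,\omega_t$ to kill $d\theta_t$'' cannot be carried out. Closedness of $\theta$ is exactly the content of (a'), and the Godbillon--Vey formalism does not supply it. What $\Theta\equiv0$ really gives points the other way. Since $d(\omega_0\wedge\omega_1)=2\Theta=0$, $X$ is divergence free. Differentiating $d\omega_0=f\,\omega_0\wedge\omega_1$ and $d\omega_1=g\,\omega_0\wedge\omega_1$ then gives $X(f)=X(g)=0$, so $f,g$ are rational first integrals of the axis. In the example the axis is the family of lines through $(1:0:0:0)$, and (b') holds.

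\emph{$\Theta\not\equiv 0$ does not give (b').} Take the logarithmic pencil $\omega_t=xyz\sum_i(\lambda_i+t\mu_i)\,dx_i/x_i$ in chart coordinates $x_1,x_2,x_3=x,y,z$.
\begin{itemize}
\item Here $\Theta=\det(\mu,(1,1,1),\lambda)\,xyz\,dx\wedge dy\wedge dz\not\equiv 0$.
\item Every member satisfies (a') with $\theta=d(xyz)/(xyz)$.
\item The axis is $X=\sum_i a_ix_i\partial_{x_i}$ with $a=\lambda\times\mu$, and its orbits are transcendental for generic $\lambda,\mu$.
\end{itemize}
So no auxiliary $R$ can produce two independent rational first integrals of $X$ when $\Theta\not\equiv0$. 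Integrating factors only give multivalued primitives, and Darboux--Jouanolou would need infinitely many invariant surfaces that you never produce.

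\emph{What is missing.} You need an invariant of the transverse geometry of the axis. Write $d\omega_i=\theta_i\wedge\omega_i$. The pencil identity is precisely $(\theta_0-\theta_1)\wedge\omega_0\wedge\omega_1=0$. Hence there is a unique rational $\theta$ with $d\omega_0=\theta\wedge\omega_0$ and $d\omega_1=\theta\wedge\omega_1$, so $d\omega_t=\theta\wedge\omega_t$ for all $t$, and $d\theta=c\,\omega_0\wedge\omega_1$ with $c$ rational.
\begin{itemize}
\item If $c\equiv0$, every member is transversally affine.
\item If $c\not\equiv0$, one checks that $L_X\omega_i=\theta(X)\,\omega_i$ and $X(c)=-2c\,\theta(X)$, so $L_X(c^{1/2}\omega_i)=0$. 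Thus $(c^{1/2}\omega_0,c^{1/2}\omega_1)$ is a transverse coframe preserved by the holonomy of the axis. Its structure functions are first integrals of $X$ with rational squares, and two independent ones make the axis algebraic, giving (b').
\item The dependent case, where the transverse structure has symmetries, still needs separate treatment.
\end{itemize}
This flat/curved dichotomy, not the vanishing of $\Theta$, is what decides between the alternatives. In the two examples above, $\theta$ is closed for the logarithmic pencil and not closed for the pull-back.
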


Therefore, every foliation in $\mathbb{F}(2,\mathbb{P}^3)$ satisfies Proposition~\ref{Brunella} and Brunella's conjecture. In fact, in \cite{cerveau-loray-pereira-touzet}, the authors exhibit an example of a transversely projective foliation that does not satisfy Proposition~\ref{Brunella}. One open question is the following: does there exist a foliation $\mathcal{F}\in\mathbb{F}(d,\mathbb{P}^3)$ with finite or trivial automorphism group such that
the closure of its orbit $\overline{SL_4(\mathbb{C})\cdot\mathcal{F}}$ is an irreducible component of $\mathbb{F}(d,\mathbb{P}^3)$?

\section{Foliations in the boundary of the Exceptional Component}

We now begin the analysis of orbits in the boundary of the exceptional component, which, as we have proved, are contained in $\overline{P\cdot \omega}$, up to a change of coordinates. 
\\

Our exceptional foliation is defined by the 1-form $\omega$, and this is the sum of:

\begin{align*}
&\omega_1=z_2\big(z_3v_{24}-2z_4v_{23}\big) \\
&\omega_2=z_3\big(3z_4v_{13}-2z_3v_{14}\big) \\
&\omega_3=z_4\big(z_2v_{14}-3z_4v_{12}\big).
\end{align*}

\noindent Observe that the vectors generating 
$\omega_i$ have both weight $L_i-L_{i+1}$, for $i=1,2,3$, each of the corresponding weight spaces has dimension $2$. These three weights lie in the plane defined by $y+2z-2=0$ in $\R^3$ (see figure \ref{fig:representation}). 
\\

We can directly compute the weights that appear in a generic element of $\overline{P \cdot \omega}$, which will lie in the half-space
$\{y+2z-2 \geq 0\}$. Below we present a table (see \ref{Table2}) listing all of them, together with their corresponding points in $\mathbb{R}^3$ and the associated vectors. We also analyze the convex hull they generate, together with the faces of the resulting polytope and the elements contained in each face.
\\

Recall that an element in $\overline{P \cdot \omega}$ will be a linear combination of the vectors in the list of table \ref{Table2}, the coefficients must satisfy certain algebraic conditions, which determine membership in the component.

\begin{table}[h]
\centering
\begin{tabular}{c|c|c}
\hline
weight & $\mathbb{R}^3$ & vector\\
\hline
$-L_3-3L_4$ & $(2,2,4)$ & $z_4^2v_{34}$\\
$-3L_3-L_4$ & $(-2,-2,4)$ & $z_3^2v_{34}$\\
$-L_2-3L_4$ & $(2,4,2)$ & $z_4^2v_{24}$\\
$-L_1-3L_4$ & $(4,2,2)$ & $z_4^2v_{14}$\\
$-2L_3-2L_4$ & $(0,0,4)$ & $z_3z_4v_{34}$\\
$-2L_2-2L_4$ & $(0,4,0)$ & $z_2z_4v_{24}$\\
$L_1-L_4$ & $(0,2,2)$ & $z_4^2v_{23},\, z_3z_4v_{24}$\\
$L_1-L_3$ & $(-2,0,2)$ & $z_3z_4v_{23},\, z_3^2v_{24}$\\
$L_2-L_4$ & $(2,0,2)$ & $z_4^2v_{13},\, z_3z_4v_{14}$\\
$L_2-L_3$ & $(0,-2,2)$ & $z_3z_4v_{13},\, z_3^2v_{14}$\\
$L_3-L_4$ & $(2,2,0)$ & $z_4^2v_{12},\, z_2z_4v_{14}$\\
$L_1-L_2$ & $(-2,2,0)$ & $z_2z_4v_{23},\, z_2z_3v_{24}$\\
\hline
\end{tabular}
\caption{Weights of $P \cdot \omega$}
\label{Table2}
\end{table}

\noindent Therefore, for the construction of the polytope we need the following set of points in $\mathbb{R}^3$:

\[
\begin{aligned}
S=\Big\{&
(2,2,4),\,(-2,-2,4),\,(2,4,2),\,(4,2,2),\\
&(0,0,4),\,(0,4,0),\,(0,2,2),\,(-2,0,2),\\
&(2,0,2),\,(0,-2,2),\,(2,2,0),\,(-2,2,0)
\Big\}.
\end{aligned}
\]

The polytope corresponding to the variety $\overline{P \cdot \omega}$
is $C=\operatorname{Conv}(S) \subset \R^3$.
Once these points are determined, the geometry of the convex polytope
$C$ can be described explicitly. 

 Its vertices are the points

\[
\operatorname{Vert}(C)=
\{
(2,2,4),\,(-2,-2,4),\,(2,4,2),\,(4,2,2),\,
(0,4,0),\,(0,-2,2),\,(2,2,0),\,(-2,2,0)
\}.
\]

 Moreover, $C$ admits the following description:

\[
C=
\left\{
(x,y,z)\in\mathbb{R}^3:
\begin{array}{rcl}
0 &\leq& x+y+z \leq 8,\\[2mm]
-x+y+z &\leq& 4,\\
x-y+z &\leq& 4,\\
x+y-z &\leq& 4,\\
x-y-z &\leq& 0,\\
z &\geq& 0,\\
y+2z &\geq& 2
\end{array}
\right\}.
\]

Hence, the faces of \(C\) are supported by the following planes, which contain the following points of $S$:

\begin{align*}
y+2z=2 \quad &:\quad \{(0,-2,2),\,(2,2,0),\,(-2,2,0)\},\\[1mm]
x+y+z=0 \quad &:\quad \{(-2,-2,4),\,(-2,0,2),\,(0,-2,2),\,(-2,2,0)\},\\[1mm]
z=0 \quad &:\quad \{(0,4,0),\,(2,2,0),\,(-2,2,0)\},\\[1mm]
x-y-z=0 \quad &:\quad \{(4,2,2),\,(2,0,2),\,(0,-2,2),\,(2,2,0)\},\\[1mm]
-x+y+z=4 \quad &:\quad \{(2,2,4),\,(-2,-2,4),\,(2,4,2),\,(0,0,4),\\
&\qquad\qquad\{(0,4,0),\,(0,2,2),\,(-2,0,2),\,(-2,2,0)\},\\[1mm]
x-y+z=4 \quad &:\quad \{(2,2,4),\,(-2,-2,4),\,(4,2,2),\\
&\qquad\qquad (0,0,4),\,(2,0,2),\,(0,-2,2)\},\\[1mm]
x+y-z=4 \quad &:\quad \{(2,4,2),\,(4,2,2),\,(0,4,0),\,(2,2,0)\},\\[1mm]
x+y+z=8 \quad &:\quad \{(2,2,4),\,(2,4,2),\,(4,2,2)\}.
\end{align*}

The geometry of the orbit closure $\overline{P\cdot\omega}$ is reflected in the combinatorics of the associated weight polytope. In particular, weight vectors whose weights lie on faces of the polytope correspond to degenerations of foliations appearing in the boundary of $\overline{P\cdot\omega}$. These degenerations arise as limits under the action of one-parameter subgroups of $SL_4(\mathbb{C})$. Recall that a one-parameter subgroup of $SL_4(\mathbb{C})$ is a homomorphism of algebraic groups
\[
\lambda:\mathbb{C}^*\longrightarrow SL_4(\mathbb{C}).
\]

Thus, the combinatorial structure of the polytope encodes the possible degenerations of the exceptional component.
More precisely, the following proposition holds.

\begin{proposition} Every element $\nu \in \overline{P \cdot \omega}$ is, up to a change of coordinates, a linear combination of weight vectors supported on a face of the polytope $C$.
\end{proposition}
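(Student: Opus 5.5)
The plan is to read the statement as being about elements $\nu$ in the boundary $\overline{P\cdot\omega}\setminus P\cdot\omega$. For points of the open orbit the statement is trivial, because $\omega$ itself is a combination of the weight vectors of $\omega_1,\omega_2,\omega_3$, and these lie on the face $y+2z=2$. Let $\nu$ be a boundary point. I will use Theorem \ref{boundary}, which says $\nu$ lies in one of $Z_1,\dots,Z_4$, and I will produce $\nu$ as a limit along a one-parameter subgroup. By the Hilbert--Mumford criterion (in its form for a solvable group acting on the affine chart $P/G\cong U/N\times T/D$), for any $\nu$ in the boundary there are $p\in P$ and a one-parameter subgroup $\lambda:\mathbb{C}^*\to T$ (up to conjugating by $U$) such that
\[
\nu=\lim_{t\to 0}\lambda(t)\cdot(p\cdot\omega).
\]
The change of coordinates allowed in the statement is exactly what lets us move $\lambda$ into the diagonal torus $T$. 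To make this concrete I will not rely on the abstract criterion. Instead I will use the explicit coordinates: a curve in $P/G$ approaching a point of $Z_i$ can be taken to have coordinates $(u_1N,[(a,b)])$, with $[(a,b)]$ tending to $\overline{L_u}$, $\overline{L_w}$ or $C_\infty$, or with the $U/N$ coordinate tending to $H_\infty$. After reparametrizing by a monomial curve $t\mapsto(t^{m_1},\dots)$, which is possible by the valuative/curve selection lemma, the torus part of the curve is a one-parameter subgroup $\lambda(t)=\mathrm{diag}(t^{r_1},\dots,t^{r_4})$ with $\sum r_i=0$.

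Next I write $p\cdot\omega=\sum_\chi c_\chi v_\chi$ as a sum of weight vectors; the weights occurring are those in Table \ref{Table2}. Then
\[
\lambda(t)\cdot(p\cdot\omega)=\sum_\chi t^{\langle\chi,r\rangle}c_\chi v_\chi .
\]
In projective space the limit is $\sum_{\chi:\langle \chi,r\rangle=m}c_\chi v_\chi$, where $m=\min\{\langle\chi,r\rangle : c_\chi\neq0\}$. Its support is the set of weights on which the linear functional $\langle\cdot,r\rangle$ reaches its minimum over the support of $p\cdot\omega$. For generic $p$ that support spans $C=\operatorname{Conv}(S)$, so this set is $F\cap S$ for the face $F$ of $C$ cut out by the functional $r$. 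The same holds when $F$ is a vertex or an edge, which are also faces.

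The main obstacle is the non-generic $p$ and $U$-unipotent part of the degeneration. If some $c_\chi$ vanish, the minimizing set is a face of a smaller polytope, not necessarily a face of $C$. To handle this, I will first use the $U$-part of $P$, which is absorbed in the change of coordinates, to put $p\cdot\omega$ in normal form. Then I will check, case by case for $Z_1,\dots,Z_4$ and using the explicit equations of $C$ listed above, that the weights surviving in the limit always lie in one of the eight supporting planes. I expect $Z_4$ (the degeneration to $H_\infty$, where the unipotent coordinates blow up) to be the delicate case. There I would conjugate the unipotent divergence into a diagonal one: writing $u(t)=\lambda(t)u_0\lambda(t)^{-1}$ reduces it to the torus computation above.
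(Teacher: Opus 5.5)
\textbf{Verdict: there is a genuine gap.} The first one is at your very first step. Your argument needs every boundary point to be, up to coordinates, a single limit $\lim\lambda(t)\cdot(p\cdot\omega)$ with $p\in P$ fixed and $\lambda$ diagonal, and nothing you invoke gives this.

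The Hilbert--Mumford criterion is a theorem about reductive groups. Even there it only says that some one-parameter limit lands in a given closed $G$-stable subset of an (affine) orbit closure, for example the closed orbit. It does not put the limit in a prescribed boundary orbit. For non-reductive groups there is no such principle in general: the unipotent group $N$ acting on $\mathbb{P}^3$ has no nontrivial one-parameter subgroups, yet $N\cdot[0:0:0:1]$ is a twisted cubic whose closure adds $[1:0:0:0]$.

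The curve-selection substitute does not repair this. An arc in $P(\mathbb{C}((t)))$ can be written as $\tau_0(t)\,u(t)\,\lambda(t)$ with $\tau_0(t)$ convergent in $T$. But $u(t)$ is then an arbitrary unipotent arc with poles, and it cannot be pulled out of the limit. A general such arc is not of the form $\lambda(t)u_0\lambda(t)^{-1}$, since its entries need not be monomials with exponents $n_i-n_j$. Reducing to that form, as you do in the $Z_4$ case, is exactly what has to be proved. Knowing which $Z_i$ contains $\nu$ does not help either, because the abstract isomorphism $\overline{P\cdot\omega}\cong\mathbb{P}^5\times\mathbb{V}(uw-v^2)$ says nothing about which weight components of $\nu$ survive. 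For comparison, the paper does not go through limits of arcs at all; it argues, in sketch form, through Kirwan's stratum attached to $\lambda_0\subset Aut(\mathcal{F}_\omega)$.

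The second gap is the non-generic case. You flag it, but your proposed fix is wrong. An element $u'\in U$ can be absorbed into the change of coordinates only when $\lambda(t)u'\lambda(t)^{-1}$ converges, because
\[
\lim\lambda(t)\cdot(u'\cdot x)=\lim\big(\lambda(t)u'\lambda(t)^{-1}\big)\cdot\big(\lambda(t)\cdot x\big).
\]
This holds only for the root groups of $U$ that $\lambda$ contracts or centralizes.

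The difficulty you anticipate really occurs. Let $\sigma\in U$ act by $z_2\mapsto z_2+ez_4$. A direct computation gives
\[
\sigma\cdot\omega=\omega+e\,(3z_3z_4v_{24}-4z_4^2v_{23})+2e^2z_4^2v_{34}-2e\,z_4^2v_{14}.
\]
Its limit under $\lambda_3$ (as $t\to\infty$) is $\sigma\cdot\omega-\omega_2$. This is supported on $(-2,2,0),(0,2,2),(2,2,4),(2,2,0),(4,2,2)$. These points lie on the plane $y=2$, which cuts through the interior of $C$, and they lie on no common facet of $C$.

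The limit becomes facial, namely $\omega_1+\omega_3$, only after applying $\sigma^{-1}$. That step is legitimate only because $\sigma$ lies in the root group of $L_2-L_4$, which $\lambda_3$ centralizes. The correct reduction is to factor $U$ into its $\lambda$-contracted, $\lambda$-centralized and $\lambda$-expanded parts and discard the first two. Even then you still need an actual argument for the expanded part. The case-by-case check you postpone is exactly where the proposition is decided, so your proof is incomplete at that point.
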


\begin{proof} The proof of this proposition relies heavily on the framework developed by Kirwan in \textit{Cohomology of Quotients in Symplectic and Algebraic Geometry}~\cite{kirwan}. We omit the technical details, as they are not central to the purposes of this article.
\\

In our case, the exceptional foliation $\mathcal{F}_{\omega}$ corresponds to an unstable point (in the sense of GIT) for the $P$ action by change of coordinates, due to the fact that its automorphism group has dimension $2$. Consider the diagonal one-parameter subgroup of $SL_4(\C)$, defined by $\lambda_0(t)=(t^{3},t,t^{-1},t^{-3})$, it is easy to see that $\lambda_0(t) \omega=t^{-2} \omega$. Then $\lambda_0$ is the one-parameter subgroup that fixes the foliation and determines the stratum in which it lies. The image of $\lambda_0$ is contained in the automorphism group of the foliation. 
\\

According to Kirwan’s notation (see Theorem 12.26, definition 12.18 and lemma 12.16 of \cite{kirwan}), this implies that $P\cdot\omega = Y_{\lambda_0}$, and this is a locally closed subset.
In our case, this set is contained in the subspace generated by the weight vectors lying in the half-space $y+2z-2 \ge 0$ associated with $\lambda_0$, namely the vectors listed in table \ref{Table2}. To have  $P\cdot\omega$ we must to impose certain open conditions on the coefficients of vectors in table \ref{Table2}.
\\
Taking the closure of $P\cdot\omega = Y_{\lambda_0}$, as in Kirwan’s construction, amounts to removing these openness conditions and allowing all coefficients to vanish (see the proof of Lemma 12.16 in \cite{kirwan}). Geometrically, this corresponds to taking limits along the various faces of the weight polytope.
\end{proof}

The weight vectors whose weights lie on the faces of the polytope, define the vector spaces in which the foliations in the boundary lie, up to change of coordinates. Associated with each face, there exists a one-parameter subgroup corresponding to the point of minimal norm on that face, with respect to the inner product given by $\langle L_i, L_j \rangle= \delta_{ij}-\frac{1}{4}$.
With the isomorphisms given by this inner product, we can see that $L_i$ corresponds to the following diagonal one-parameter subgroups:

\begin{align*}
L_1 \to (t^3,t^{-1},t^{-1},t^{-1})\\
L_2 \to (t^{-1},t^{3},t^{-1},t^{-1})\\
L_3 \to (t^{-1},t^{-1},t^{3},t^{-1})\\
L_4 \to (t^{-1},t^{-1},t^{-1},t^3)\\
\end{align*}

In our case, the polytope has eight faces. However, we disregard the last three $x-y+z=4$, $x+y-z=4$ and $x+y+z=8$, since they define foliations whose singular sets have dimension at least $2$, whereas in our definition we require foliations to have singular sets of dimension at most $1$.

The following table lists the relevant faces together with their associated one-parameter subgroups.

\begin{table}[ht]
\centering
\begin{tabular}{c c}
\hline
Face in $\R^3$ & Associated $1$-PS \\
\hline

0. $y+2z=2$
&
$\lambda_0(t)=(t^{3},t,t^{-1},t^{-3})$
\\[2mm]

1. $x+y+z=0$
&
$\lambda_1(t)=(t,1,t^{-1},1)$
\\[2mm]

2. $x-y-z=0$
&
$\lambda_2(t)=(1,t,1,t^{-1})$
\\[2mm]

3. $z=0$
&
$\lambda_3(t)=(t,t^{-1},t,t^{-1})$
\\[2mm]

4. $-x+y+z=4$
&
$\lambda_4(t)=(t^{3},t^{-1},t^{-1},t^{-1})$
\\
\hline
\end{tabular}
\caption{One-parameter subgroups associated with the relevant faces of the weight polytope of $\overline{P \cdot \omega}$}
\label{tab:kirwan-1ps}
\end{table}

For $i=0,\dots,4$, let $W_i$ denote the vector subspace of $H^0(\mathbb{P}^3,\Omega^1(4))$ generated by the weight vectors lying on the $i$-th face of the polytope. Then the following limits exist and define, up to a change of coordinates, the foliations contained in the exceptional component:

\begin{align*}
P\cdot\omega &\longrightarrow W_i,\\
p\cdot\omega &\longmapsto \lim_{t\to \infty}\lambda_i(t)(p\cdot\omega).
\end{align*}

Moreover, this limit defines a foliation that is invariant under the one-parameter subgroup $\lambda_i(t)$.
\\

The 0-face contains the weight vectors generating the entire foliation $\omega$, namely $\omega_1$, $\omega_2$, and $\omega_3$. Therefore, the corresponding limit is a foliation in the orbit of the exceptional foliation, yielding $P\cdot\omega$. In fact, as mentioned above, we have that $\lambda_0(t)\in \operatorname{Aut}(\omega)$ for every $t\in\mathbb{C}^*$. The conclusion is that the face corresponing to $\lambda_0$ is the open set $P \cdot \omega$ in $\overline{P\cdot \omega}$.

\subsection{Faces 1,2,3}

The faces corresponding to $\lambda_i$, for $i=1,2,3$ contain only two weights among $L_i - L_{i+1}$. Remember that $\omega=\omega_1+\omega_2+\omega_3$, where:
 
 \begin{align*}
&\omega_1=z_2z_3v_{24}-2z_2z_4v_{23} \\
&\omega_2=3z_3z_4v_{13}-2z_3^2v_{14} \\
&\omega_3=z_4z_2v_{14}-3z_4^2v_{12},
\end{align*}

\noindent and consider the diagonal 1-parameter subgroup:

\begin{align*}
    \lambda:\C^* &\to P \subset SL_4(\C)\\
    t &\mapsto (t^{n_1},t^{n_2},t^{n_3},t^{n_4}).
\end{align*}

As we said before: $\lim_{t \to 0} \lambda(t) \cdot \omega_i=t^{n_i-n_{i+1}}\omega_i,$ for $i=1,2,3$. Then

\begin{align*}
\lambda_1(t) \cdot \omega&=t\omega_1+t\omega_2+t^{-1}\omega_3\\
\lambda_2(t) \cdot \omega&=t^{-1}\omega_1+t \omega_2+t \omega_3\\
\lambda_3(t) \cdot \omega&=t^2 \omega_1+t^{-2}\omega_2+t^{2}\omega_3.
\end{align*}

\noindent Therefore

\begin{align*}
\lim_{t \to \infty} \lambda_1(t) \cdot \omega&=\omega_1+\omega_2\\
\lim_{t \to \infty} \lambda_2(t) \cdot \omega&=\omega_2+\omega_3\\
\lim_{t \to \infty} \lambda_3(t) \cdot \omega&=\omega_1+\omega_3.
\end{align*}

We conclude that the foliations $\omega_2+\omega_3$, $\omega_1+\omega_3$, $\omega_1+\omega_2$ are in $\overline{P \cdot \omega}-(P \cdot \omega)$. It can be computed directly that the automorphism groups of these foliations are as follows:

\begin{align*}
&Aut(\omega_1+\omega_2)=\\
&  \left\{\left(\begin{array}{cccc}
a_{11}&a_{12}&a_{13}&0 \\
0&a_{22}&a_{23}&0\\
0&0&a_{33}&0\\
0&0&0&a_{44}
\end{array}\right) : 3a_{22}a_{23}-a_{12}a_{33}=0, 3a_{23}^2-2a_{13}a_{33}=0, a_{22}^2-a_{11}a_{33}=0 \right\}
\end{align*}

\begin{align*}
&Aut(\omega_2+\omega_3)=\\
&  \left\{\left(\begin{array}{cccc}
a_{11}&a_{12}&0&0 \\
0&a_{22}&0&0\\
0&0&a_{33}&a_{34}\\
0&0&0&a_{44}
\end{array}\right) : a_{12}a_{44}-3a_{22}a_{34}=0, a_{11}a_{44}-a_{22}a_{33}=0 \right\}
\end{align*}

\begin{align*}
&Aut(\omega_1+\omega_3)=\\
&  \left\{\left(\begin{array}{cccc}
a_{11}&0&0&0 \\
0&a_{22}&a_{23}&a_{24}\\
0&0&a_{33}&a_{34}\\
0&0&0&a_{44}
\end{array}\right) : a_{22}a_{44}-a_{33}^2=0, a_{23}a_{44}-2a_{44}a_{33}=0, a_{24}a_{44}-a_{34}^2=0 \right\}
\end{align*}

\noindent In each case, we have that the dimension of the automorphism group is 3 and, as expected, it is a subgroup of the parabolic group $P$. Then its orbit has dimension $6$ in $\overline{P\cdot \omega}$.

\begin{theorem} We have the following isomorphisms of locally closed varieties

\begin{align*}
P \cdot (\omega_1+\omega_2) &\cong \mathbb{A}^5 \times \overline{L}_u-\{(0:0:1:0),(0:0:0:1)\} \subset Z_1\\
P \cdot (\omega_2+\omega_3) &\cong \mathbb{A}^5 \times \overline{L}_w-\{(0:0:0:1),(1:0:0:0)\} \subset Z_3\\
P \cdot (\omega_1+\omega_3) &\cong \mathbb{A}^5 \times C_{\infty}-\{(0:0:1:0),(1:0:0:0)\} \subset Z_2
\end{align*}
\end{theorem}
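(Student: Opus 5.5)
Each claimed isomorphism will be obtained by tracking how the identification $P\cdot\omega\cong P/G\cong \mathbb{A}^5\times\big(V-(L_u\cup L_w)\big)$ of Section~4 extends to the boundary, and then identifying the orbit of the limit foliation with a stratum of the boundary description of Theorem~\ref{boundary}. First, the factor $\mathbb{A}^5\cong U/N$ is unchanged: $\lambda_i$ is diagonal, so it normalizes $U$. Writing $\lambda_i(t)\,u\,t_0\cdot\omega = (\lambda_i(t)u\lambda_i(t)^{-1})\,\lambda_i(t)t_0\cdot\omega$ shows that the unipotent part of a point of $P\cdot\omega$ survives in the limit up to reparametrization. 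Only the torus coordinate $tD\in T/D$ degenerates, and it moves to the boundary of the quadric cone $\mathbb{V}(uw-v^2)$.

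Next I locate each limit in the cone. With the coordinates $[(a_1a_2^{-3},a_2a_3)]\mapsto(a^2:ab:b^2:1)$, I compute the image of $\lambda_i(t)$ for $t\to\infty$:
\begin{itemize}
\item for $\lambda_1(t)=(t,1,t^{-1},1)$ we get $a=t$, $b=t^{-1}$, so $(t^2:1:t^{-2}:1)\to(1:0:0:0)$;
\item for $\lambda_2=(1,t,1,t^{-1})$ we get $a=t^{-3}$, $b=t$, so the point tends to $(0:0:1:0)$;
\item for $\lambda_3=(t,t^{-1},t,t^{-1})$ we get $a=t^4$, $b=1$, so $(t^8:t^4:1:1)\to(1:0:0:0)$.
\end{itemize}
To see which boundary component, $\overline{L_u}$, $\overline{L_w}$ or $C_\infty$, the limit lands in, I will not use just the limit point. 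Instead I will take the closure of the image of the rank-two subtorus that is the stabilizer direction of the limit, i.e.\ the torus part of $\operatorname{Aut}(\omega_i+\omega_j)$ together with $\lambda_i$. For $\omega_1+\omega_2$ the torus part is cut out only by $a_{22}^2=a_{11}a_{33}$, and it should correspond to the locus $v=0$, $u=0$ in the compactified cone, namely $\overline{L_u}$. The other two cases are analogous: the condition $a_{11}a_{44}=a_{22}a_{33}$ gives $\overline{L_w}$, and the conditions for $\omega_1+\omega_3$ give the curve at infinity $C_\infty$. I expect to have to fix the (noncanonical) choice of isomorphism carefully so that these labels match those in Theorem~\ref{boundary}.

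Then I verify the orbit structure. Since $\dim\operatorname{Aut}(\omega_i+\omega_j)=3$ inside $P$, each orbit has dimension $\dim P-3=12-3\cdot$ hmm; relative to $P$ the orbit is $P/\operatorname{Aut}$, so it is $6$-dimensional, matching $5+1$. The unipotent part of each stabilizer is one-dimensional and conjugate to a degeneration of $N$, so $U/(U\cap\operatorname{Aut})\cong\mathbb{A}^5$. The torus part $T/(T\cap\operatorname{Aut})$ is one-dimensional, and it is a torus $\C^*$, which is the corresponding projective line with its two torus-fixed points removed. Finally I check which points are removed. These are the endpoints of each line, the limits of the further one-parameter subgroups that kill one more $\omega_k$:
\begin{itemize}
\item on $\overline{L_u}$ the points $(0:0:1:0)$ and $(0:0:0:1)$;
\item on $\overline{L_w}$ the points $(1:0:0:0)$ and $(0:0:0:1)$;
\item on $C_\infty$ the points $(1:0:0:0)$ and $(0:0:1:0)$.
\end{itemize}
All of these are intersections with the other boundary components, consistent with the orbit being the open stratum of its component.

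The main obstacle is the second step: rigorously matching the abstract compactification $\mathbb{P}^5\times\mathbb{V}(uw-v^2)$ with actual limits of forms. The identification was made via noncanonical representatives, and we must show that the map from $P/G$ extends to a morphism on these strata sending the $P$-orbit of $\omega_i+\omega_j$ bijectively onto the stated set. I would handle this by writing an explicit parametrization: take $u\,t\cdot\omega$ with $t$ in the appropriate rank-one degenerate torus, write down its coefficients in the basis of Table~\ref{Table2}, and check injectivity modulo the stabilizer computed above. Smoothness of both sides, as homogeneous spaces, then upgrades the bijective morphism to an isomorphism.
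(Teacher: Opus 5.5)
\textbf{There is a genuine gap in your second step.} Your third step matches the paper's dimension count: $P\cdot(\omega_i+\omega_j)\cong P/S$ with $S$ the $3$-dimensional stabilizer, $U/(U\cap S)\cong\mathbb{A}^5$ and $T/(T\cap S)\cong\C^*$. Two minor fixes are needed there. First, $\dim P=9$, not $12$. Second, $U\cap S$ is not normalized by all of $T$, so writing $P/S$ as a product requires conjugating the unipotent coordinate by a section of $T\to T/(T\cap S)$.

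That step only gives an abstract isomorphism with $\mathbb{A}^5\times\C^*$. The real content of the statement is the placement inside $Z_1$, $Z_3$, $Z_2$, and your second step, which is supposed to supply it, fails.
\begin{itemize}
\item Your own (correct) computation shows that, as $t\to\infty$, $\lambda_1(t)D$, $\lambda_2(t)D$, $\lambda_3(t)D$ converge to the torus-fixed points $(1:0:0:0)$, $(0:0:1:0)$, $(1:0:0:0)$. These are exactly points the statement removes, and $(1:0:0:0)$ is not even on $\overline{L_u}$.
\item The fallback does not work either. The torus part $\{a_2^2=a_1a_3\}$ of $\operatorname{Aut}(\omega_1+\omega_2)$ maps onto $\{ab=1\}\subset T/D$, i.e.\ onto $v=z$. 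Being a subgroup, it contains the point $(1:1:1:1)$ of $\omega$ itself, and its closure is the conic $\{v=z,\ uw=z^2\}$, not $\overline{L_u}=\{u=v=0\}$. Also, $\lambda_1$ already lies in it.
\item Conceptually, the stratum reached along $\lambda_i$ is the $T$-orbit $T/T_{\omega_i+\omega_j}$ of the limit. The stabilizer acts trivially there, so it cannot locate that stratum.
\item Your first step has a related flaw. Conjugation by $\lambda_i(t)$ multiplies $x_{jk}$ by $t^{n_j-n_k}$, which diverges for some entries (e.g.\ $x_{12}$ under $\lambda_1$). So the unipotent coordinate does not pass through the limit.
\end{itemize}

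\textbf{This cannot be fixed by choosing the noncanonical identification more carefully.} Use the paper's convention $\lambda(t)\cdot\omega_i=t^{n_i-n_{i+1}}\omega_i$, with all limits as $t\to\infty$.
\begin{itemize}
\item $\lambda_1(t)D$ and $\lambda_3(t)D$ have the same limit $(1:0:0:0)$. Yet $\lambda_1(t)\cdot\omega\to[\omega_1+\omega_2]$ while $\lambda_3(t)\cdot\omega\to[\omega_1+\omega_3]$.
\item For $\mu(t)=(1,t,t^{-1},1)$, one gets $\mu(t)D\mapsto(t^{-6}:t^{-3}:1:1)\to(0:0:1:1)$, an interior point of $\overline{L_u}$. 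But $\mu(t)\cdot\omega=t^{-1}\omega_1+t^{2}\omega_2+t^{-1}\omega_3\to[\omega_2]$, which is not in $P\cdot(\omega_1+\omega_2)$, since its singular set contains the plane $z_3=0$.
\end{itemize}
So the orbit map does not extend continuously over $\mathbb{V}(uw-v^2)$ in this chart. No tracking of limits there can put $P\cdot(\omega_1+\omega_2)$ into $\mathbb{P}^5\times\overline{L_u}$.

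The paper's proof does not supply this step either. It simply asserts $\lambda_1\to\overline{L_u}$, $\lambda_2\to\overline{L_w}$, $\lambda_3\to C_\infty$ and then counts dimensions, and your computation contradicts that assertion.

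A correct argument has to work with the actual closure of $T\cdot[\omega]$ in $\mathbb{P}(H^0(\mathbb{P}^3,\Omega^1_{\mathbb{P}^3}(4)))$. That closure is the plane $\{[c_1\omega_1+c_2\omega_2+c_3\omega_3]\}$. In it, $T\cdot[\omega_1+\omega_2]$, $T\cdot[\omega_2+\omega_3]$, $T\cdot[\omega_1+\omega_3]$ are exactly the lines $c_3=0$, $c_1=0$, $c_2=0$ minus the points $[\omega_k]$. This plane is smooth and the quadric cone is not. So the labels $\overline{L_u}$, $\overline{L_w}$, $C_\infty$, and the model of Theorem~\ref{boundary} they come from, must be re-examined rather than matched by a choice of representatives.
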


\begin{proof} We can directly see that the limits associated with the diagonal one-parameter subgroups $\lambda_i$, for $i=1,2,3$ correspond, in the variety $\mathbb{V}(uw-v^2)$ (which is isomorphic to the closure of the quotient $T/D$), to:

\begin{align*}
    \lambda_1 &\to \overline{L}_u\\
    \lambda_2 &\to \overline{L}_w\\
    \lambda_3 &\to C_{\infty}.    
\end{align*}

Therefore, these are degenerations of the diagonal part of the group P, it follows that the orbits $P \cdot (\omega_2+\omega_3)$, $P \cdot(\omega_1+\omega_3)$, $P \cdot(\omega_1+\omega_2)$ correspond to dense open subsets of components $Z_1, Z_2, Z_3$ (see Theorem \ref{boundary}) beacuse all have dimension $6$. In fact they are subsets in $\mathbb{A}^5 \times \overline{L}_u$, $\mathbb{A}^5 \times \overline{L}_w$ and $\mathbb{A}^5 \times C_{\infty}$, the affine part $\mathbb{A}^5$ corresponds to letting $P$ act on $\omega_i+\omega_j$, for $i \neq j$.
\end{proof}

We can say more about these foliations: they are logarithmic of type $(1,1,2)$, and we can see that they have the following singular set and rational first integral.

\begin{equation*}
\renewcommand{\arraystretch}{1.1}
\small
\begin{array}{c|c|c|c}
\textrm{Foliation} & \textrm{Singular Set} & \textrm{Rational First Integral} & \textrm{Logarithmic form} \\
\hline
\omega_1+\omega_2 &
\mathbb{V}(z_4,z_3)\cup \mathbb{V}(z_3,z_2)\cup \mathbb{V}(z_4,2z_1z_3-z_2^2) &
\frac{(2z_1z_3-z_2^2)z_4^2}{z_3^4} &
-4\frac{dz_3}{z_3}
+2\frac{dz_4}{z_4}
+\frac{d(2z_1z_3-z_2^2)}{2z_1z_3-z_2^2}
\\
\hline
\omega_2+\omega_3 &
\mathbb{V}(z_4,z_1)\cup \mathbb{V}(z_4,z_3)\cup \mathbb{V}(z_1,2z_2z_4-z_3^2) &
\frac{(2z_2z_4-z_3^2)^3}{z_1^2z_4^4} &
-2\frac{dz_1}{z_1}
-4\frac{dz_4}{z_4}
+3\frac{d(2z_2z_4-z_3^2)}{2z_2z_4-z_3^2}
\\
\hline
\omega_1+\omega_3 &
\mathbb{V}(z_4,z_2)\cup \mathbb{V}(z_4,z_3)\cup \mathbb{V}(z_2,z_1) &
\frac{(z_1z_4-z_2z_3)^2}{z_2^3z_4} &
-3\frac{dz_2}{z_2}
-\frac{dz_4}{z_4}
+2\frac{d(z_1z_4-z_2z_3)}{z_1z_4-z_2z_3}
\end{array}
\end{equation*}

The foliations $\omega_1$, $\omega_2$ and $\omega_3$ have a singular set of dimension greater than 2, and therefore they are not properly regarded as points in the space of foliations $\mathbb{F}(2,\mathbb{P}^3)$.

\subsection{Face 4}

It remains to reach points in the last irreducible component of the boundary, namely $Z_4=H_{\infty} \times \mathbb{V}(uw-v^2)$ (see Theorem \ref{boundary}). For this purpose, we must study degenerations arising from the 1-parameter subgroup $\lambda_4(t)=(t^{3},t^{-1},t^{-1},t^{-1})$. For that we use the following general lemma:

\begin{lemma}
Consider the 1-Parameter subgroup:

\begin{align*}
    \lambda_4:\C^* &\to P \subset SL_4(\C), \quad t \mapsto (t^{3},t^{-1},t^{-1},t^{-1}).
\end{align*}
\noindent A foliation $\nu \in \mathbb{F}(d,\mathbb{P}^3)$ does not depend on the variable 
$z_1$ if and only if: $$\lambda_4(t) \cdot \nu=t^{d+2} \omega.$$

\noindent Moreover, for any foliation $\nu \in \mathbb{F}(d,\mathbb{P}^3)$ the foliation given by 

$$\lim_{t \mapsto \infty} \lambda_4(t) \cdot \nu,$$

\noindent does not depend on the variable 
$z_1$.
\end{lemma}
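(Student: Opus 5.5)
The plan is to decompose $\nu$ according to its degree in the variable $z_1$ and read off the action of $\lambda_4$ piece by piece. Write $\nu=\sum_{i=1}^4 A_i\,dz_i$, with each $A_i$ homogeneous of degree $d+1$. For a monomial form $z^{a}dz_j$, define its \emph{$z_1$-degree} $k$ as the exponent of $z_1$ in $z^a$, plus one if $j=1$. Then
\[
\nu=\sum_{k=0}^{d+2}\nu^{(k)},
\]
where $\nu^{(k)}$ collects the terms of $z_1$-degree $k$. The Euler condition $\sum_i z_iA_i=0$ is homogeneous for this grading (multiplying $A_1$ by $z_1$ and $A_j$ by $z_j$ preserves $z_1$-degree), so each $\nu^{(k)}$ is itself an element of $H^0(\mathbb{P}^3,\Omega^1_{\mathbb{P}^3}(d+2))$. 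Saying that $\nu$ does not depend on $z_1$ means exactly $\nu=\nu^{(0)}$: neither $z_1$ nor $dz_1$ appears.

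Next I compute the action of $\lambda_4(t)$ on each graded piece, using the formula for the action given in Section 2. Up to the sign convention in the exponents (fixed once by testing on $z_2z_3v_{24}$, where the statement forces the factor $t^{d+2}$), the action multiplies $z_1$ and $dz_1$ by $t^{3}$ and $z_2,z_3,z_4,dz_2,dz_3,dz_4$ by $t^{-1}$, or the reverse. A term with $z_1$-degree $k$, of total degree $d+2$ counting the differential, therefore scales by $t^{\pm\big((d+2-k)-3k\big)}=t^{\pm(d+2-4k)}$. With the sign chosen so that the $k=0$ piece gets $t^{d+2}$, we obtain
\[
\lambda_4(t)\cdot\nu=\sum_{k}t^{\,d+2-4k}\,\nu^{(k)} .
\]
(The ``$\omega$'' on the right-hand side of the statement should read $\nu$.) Since the exponents $d+2-4k$ are distinct, $\lambda_4(t)\cdot\nu=t^{d+2}\nu$ holds if and only if $\nu^{(k)}=0$ for all $k\ge1$, that is, if and only if $\nu$ is independent of $z_1$. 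This proves the first claim.

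For the limit, work in the projective space $\mathbb{P}(H^0)$. As $t\to\infty$, the dominant term in $\lambda_4(t)\cdot\nu$ is the one with the largest exponent, namely $\nu^{(0)}$, provided $\nu^{(0)}\neq0$. Dividing by $t^{d+2}$ gives $[\lambda_4(t)\cdot\nu]\to[\nu^{(0)}]$, and $\nu^{(0)}$ is free of $z_1$ by construction. Integrability is preserved in the limit because the locus $\omega\wedge d\omega=0$ is closed. The limit is also $\lambda_4$-invariant, consistent with the first part.

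The main obstacle is the degenerate case $\nu^{(0)}=0$. There the projective limit is the nonzero $\nu^{(k_{\min})}$ with the smallest $k$, and it does depend on $z_1$. I would handle this by restricting the statement to forms with $\nu^{(0)}\ne0$. This condition is open and holds on $P\cdot\omega$: the term $z_2z_3v_{24}$ of $\omega$ has no $z_1$, and the upper-triangular action cannot create $z_1$-free terms from terms containing $z_1$ in a way that cancels it. The remaining checks would be to verify this restriction carefully, to confirm the sign convention, and to record that the singular set of $\nu^{(0)}$ may jump. The last point is exactly why Face 4 gives pull-back type foliations from $\mathbb{P}^2$, the linear projection forgetting $z_1$.
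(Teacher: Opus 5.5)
Your argument is essentially the paper's. It also gives a term with $i$ occurrences of $z_1$ or $dz_1$ the weight $(d+2-i)-3i=d+2-4i$, and lets $t\to\infty$ select the $z_1$-free part. You are also right that $\omega$ should read $\nu$ in the statement.

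Your caveat about $\nu^{(0)}=0$ is a real defect of the statement, and the paper's proof passes over it. $\nu^{(0)}$ is the restriction of $\nu$ to the plane $\{z_1=0\}$, so it vanishes exactly when that plane is invariant. For example, $\omega_2+\omega_3\in\mathbb{F}(2,\mathbb{P}^3)$ is a combination of terms $q(z_2,z_3,z_4)\,v_{1j}$. It is therefore fixed by $\lambda_4$ and is its own limit, yet it depends on $z_1$. Restricting to $\nu^{(0)}\neq0$ is the correct repair, and that is all the paper needs on $P\cdot\omega$.

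Your stated reason that $\nu^{(0)}\neq0$ on $P\cdot\omega$ is wrong as worded. The upper-triangular substitution $g=p^{-1}$ sends $z_1\mapsto g_{11}z_1+g_{12}z_2+g_{13}z_3+g_{14}z_4$, so it does create $z_1$-free terms. The conclusion still holds, by a direct check on one monomial. The coefficient of $z_2^2z_3\,dz_4$ in $(g^*\omega)^{(0)}$ is $g_{22}^2g_{33}g_{44}\neq0$. Every other monomial term of $\omega$ either has a coefficient divisible by $z_4$, a property $g$ preserves. Or it comes from $-2z_3^2v_{14}$, whose image has $z_2$-degree at most one.
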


\begin{proof}
Suppose that $\nu=\sum_{i=1}^4 A_i(z_1,z_2,z_3,z_4)dz_i$ defines a foliation in $\mathbb{F}(d,\mathbb{P}^3)$. It is clear that if $\nu$ does not depend on $z_1$, then the action is $\lambda_1(t) \cdot \nu=t^{d+2} \nu$. Conversely, if $\nu$ contains a term involving $z_1$ or $dz_1$, then that term has weight
\[
(d+2-i)-3i = d+2-4i,
\]
for some $i\in\mathbb{N}$. Since $d+2-4i < d+2$, then 
$\lim_{t \mapsto \infty} \lambda_1(t) \cdot \nu$ consists only of terms that are independent of $z_1$.
\end{proof}

With this, we conclude that all degenerations corresponding to face 4 are foliations which, up to a change of coordinates, do not depend on the variable $z_1$, and hence they are pull-back foliations.
\\

A foliation of this type contains in its automorphism group matrices of the form

$\begin{pmatrix}
a^3 & * & * & *\\
0 & a^{-1} & 0 & 0 \\
0 & 0 & a^{-1} & 0\\
0 & 0 & 0 & a^{-1}
\end{pmatrix}, $

\noindent where $a \in \C^*$. This means that its automorphism group has dimension 4 and therefore its orbit in $P$ has dimension 5. Hence, the last component $Z_4$ (see Theorem \ref{boundary}) is formed by a 1-dimensional family of pull-back foliations. All of the above can be summarized in the following theorem.

\begin{theorem}
The irreducible component $H_{\infty} \times \mathbb{V}(uw-v^2)$ of the boundary of the exceptional component contains a dense open subset consisting of infinitely many linear pull-back foliations from $\mathbb{P}^2$.
\end{theorem}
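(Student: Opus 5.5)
The plan is to realize the boundary points of $Z_4$ as limits $\lim_{t\to\infty}\lambda_4(t)\cdot(p\cdot\omega)$ with $p\in P$, and then to count dimensions.

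First, fix $p\in P$ and write $p\cdot\omega$ as a combination of the weight vectors of Table~\ref{Table2}. By the preceding lemma, the limit under $\lambda_4$ keeps only the terms of maximal $\lambda_4$-weight. These are exactly the vectors whose weights lie on the face $-x+y+z=4$, namely those involving neither $z_1$ nor $dz_1$. So the limit $\nu_p$ is a $1$-form in $z_2,z_3,z_4$ alone, and hence a linear pull-back, under $(z_1:z_2:z_3:z_4)\mapsto(z_2:z_3:z_4)$, of a degree-two foliation on $\mathbb{P}^2$.

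To see which foliations on $\mathbb{P}^2$ occur, I would compute them through the first integral. Acting by $p$ on $F=\frac{(z_1z_4^2-z_2z_3z_4+z_3^3/3)^2}{(z_2z_4-z_3^2/2)^3}$ and then applying $\lambda_4$ (as in the ruling remark, via $\sigma_t$) replaces the coefficient of $z_1$ by something tending to $0$ or $\infty$. In the limit one obtains first integrals of the form
\[
\frac{(c\,z_4^2\,\ell+ g_3(z_2,z_3,z_4))^2}{q(z_2,z_3,z_4)^3},
\]
where $\ell$ is a linear form coming from the $z_1$-direction. One could also get degenerate versions depending only on $z_2,z_3,z_4$. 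I would then check, for a generic choice of $p$, that the resulting $\nu_p$ has singular set of codimension $\ge 2$, so that it lies in $\mathbb{F}(2,\mathbb{P}^3)$. I would also check that it lies in $Z_4=H_\infty\times\mathbb{V}(uw-v^2)$. The latter should follow because $\lambda_4$ acts on the $\mathbb{A}^5=U/N$ factor by scalings that send generic points to $H_\infty$, while leaving the $T/D$ coordinate essentially free.

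Next comes the dimension count. For such a $\nu_p$, the automorphism group contains the image of $\lambda_4$ together with the unipotent matrices with arbitrary first row $(1,*,*,*)$. These act trivially, because $\nu_p$ involves neither $z_1$ nor $dz_1$. This gives a group of dimension at least $4$, so $\dim P\cdot\nu_p\le 6$; more precisely it is $5$ modulo the $N\rtimes D$ data already quotiented out. Since $\dim Z_4=5+2-1=6$, the family $\{\nu_p\}$ must contain at least a one-parameter family of pairwise non-equivalent foliations. I would exhibit this parameter explicitly, as the cross-ratio-type invariant of the pencil generated by $\bigl(g_3+\dots\bigr)^2$ and $q^3$ on $\mathbb{P}^2$. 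This shows that the union of the $P$-orbits is $6$-dimensional and dense in the irreducible variety $Z_4$, and therefore contains a dense open subset by Chevalley's theorem on constructible sets.

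The main obstacle is the explicit computation of the limit foliations on $\mathbb{P}^2$. One has to verify both that infinitely many non-equivalent ones occur, that is, that the modulus is non-constant, and that generically the singular set stays of codimension $\ge 2$. I would try first a normal form for $p$ using only the torus $T$ and $x_{12},x_{13},x_{14}$, and compute the resulting one-parameter family directly.
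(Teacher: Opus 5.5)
Your proposal follows essentially the same route as the paper: the $\lambda_4$-limit lemma kills every term containing $z_1$ or $dz_1$, so the limits are linear pull-backs. The $4$-dimensional stabilizer $\left(\begin{smallmatrix}a^3&*\\0&a^{-1}I_3\end{smallmatrix}\right)$ then gives $5$-dimensional $P$-orbits in the $6$-dimensional $Z_4$, and the remaining parameter is the modulus of the pencil $P^2/Q^3$, which the paper makes explicit via $F_{a,b}$ (obtained from $z_1\mapsto\epsilon z_1+az_2+bz_3$, $\epsilon\to0$) with invariant $a^2/b$.

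Two small corrections. First, since $\dim P=9$, the orbit dimension is $9-4=5$ directly; your ``$\le 6$'' is a slip, and $N\rtimes D$ plays no role there. Second, only the direction in which the $z_1$-coefficient tends to $0$ is relevant: the other direction yields, for $p=\mathrm{id}$, the first integral $z_1^2z_4^4/Q^3$ of the face-2 foliation $\omega_2+\omega_3$, which is not a pull-back.
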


We now give the explicit construction of this 1-dimensional family of pull-back foliations lying in the boundary of the exceptional component.
\\

For the construction we take the rational first integral 
$$\frac{(z_1z_4^2-z_2z_3z_4+\frac{z_3^3}{3})^2}{(z_2z_4-\frac{z_3^2}{2})^3},$$

\noindent of the exceptional foliation given by $\omega$. We work in affine chart $z_4=1$, then we have, $\frac{(z_1-z_2z_3+\frac{z_3^3}{3})^2}{(z_2-\frac{z_3^2}{2})^3}$ and we consider the change of coordinates: $z_1 \to \epsilon z_1+az_2+bz_3, z_2 \to z_2, z_3 \to z_3$. If we make this change and let $\epsilon \to 0$, we obtain:

$$F_{a,b} = \frac{(a z_2 + b z_3 - z_2 z_3 + \frac{z_3^3}{3})^2}{(z_2 - \frac{z_3^2}{2})^3}.$$

\noindent Which depends only on $z_2$ and $z_3$, this means that it defines a rational first integral of a foliation pull-back from 
$\mathbb{P}^2$. By construction: for every $(a,b) \in \C^2$, $F_{a,b}$ is a rational first integral of a foliation belonging to the boundary of the exceptional component.

\begin{proposition}
Let $(a,b) \in \C^2$ and consider the pencil in $\mathbb{P}^2$ given by:

$$F_{a,b} =\frac{P_{a,b}^2}{Q^3},$$

\noindent where $P_{a,b} = a z_2z_4^2 + b z_3z_4^2 - z_2 z_3z_4 + \frac{z_3^3}{3},$
and $Q = z_2z_4 - \frac{z_3^2}{2}$. For generic  $(a,b), (a',b') \in \C^2$ the pencils 
corresponding to $F_{a,b}$ and $F_{a',b'}$ are equivalent if and only if
$(\frac{a}{a'})^2 = \frac{b}{b'}$
\end{proposition}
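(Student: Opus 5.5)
The plan is to reduce an equivalence of pencils to a projective transformation of $\mathbb{P}^2$ (coordinates $z_2,z_3,z_4$) that preserves the conic $Q$ and sends $P_{a,b}$ to a multiple of $P_{a',b'}$. I will then compute which such transformations exist. The fibers of $F_{a,b}$ are the curves $P_{a,b}^2-cQ^3=0$, $c\in\mathbb{P}^1$. For generic $(a,b)$ the only non-reduced fibers are $\{P_{a,b}^2=0\}$ and $\{Q^3=0\}$, and they are distinguished by their multiplicities $2$ and $3$. So if $g\in PGL_3(\C)$ maps the pencil of $F_{a,b}$ to that of $F_{a',b'}$, then $g^*Q=\mu Q$ and $g^*P_{a',b'}=\kappa P_{a,b}$ for some $\mu,\kappa\in\C^*$. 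Hence $g$ lies in the stabilizer of the conic $\{Q=0\}$, which is a copy of $PGL_2(\C)$ acting on $\{Q=0\}\cong\mathbb{P}^1$.

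Next I use the intersection divisor $P_{a,b}\cdot Q$ on the conic. Parametrize the conic by $z_3=t$, $z_2=t^2/2$, $z_4=1$, together with the point $(1:0:0)$ at $t=\infty$. Then $P_{a,b}$ restricts to $-\tfrac{t^3}{6}+\tfrac{a}{2}t^2+bt$, and the remaining intersection is $(1:0:0)$ with multiplicity $3$. For generic $(a,b)$ the three finite roots $0,t_1,t_2$ are simple. The point $(1:0:0)$ is therefore the unique point of contact of order $3$, so $g$ fixes it. This forces $g$ into the Borel subgroup of the conic stabilizer. That subgroup consists of the torus $z_2\mapsto\lambda^2z_2$, $z_3\mapsto\lambda\mu z_3$, $z_4\mapsto\mu^2z_4$, composed with the unipotent maps $z_4\mapsto z_4$, $z_3\mapsto z_3+\alpha z_4$, $z_2\mapsto z_2+\alpha z_3+\tfrac{\alpha^2}{2}z_4$. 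These unipotent maps are the restrictions of $n(\alpha)$ from Section 3, and they preserve $Q$ exactly.

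The main computation, and the step I expect to be the real obstacle, is to show that for generic $(a,b)$ the unipotent part must be trivial. I will substitute the unipotent map into $P_{a,b}$. The result is a cubic of the same shape, but with additional monomials $z_3^2z_4$ and $z_4^3$. Their coefficients are polynomials in $\alpha,a,b$; for instance, $z_4^3$ receives the value of the affine polynomial at $z_3=\alpha$, which up to a constant is $-\alpha^3/6+a\alpha^2/2+b\alpha$. A torus rescaling cannot cancel these monomials, so requiring $g^*P_{a',b'}$ to be proportional to $P_{a,b}$ forces both coefficients to vanish. I expect that for generic $(a,b)$ the only common solution is $\alpha=0$. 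If this fails, I will instead track the set $\{0,t_1,t_2\}$: a unipotent map translates $t$, and it must carry the root $0$, which is the distinguished root coming from the absence of a $z_4^3$ term, back to $0$.

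With $\alpha=0$, a direct substitution under the torus gives $a'=a\mu/\lambda$ and $b'=b\mu^2/\lambda^2$, after normalizing the coefficients of $z_2z_3z_4$ and $z_3^3$ (which both scale by $\lambda^3\mu^3$). Hence $a'^2/b'=a^2/b$, that is $(a/a')^2=b/b'$. Conversely, given this relation, choosing $\mu/\lambda=a'/a$ produces the equivalence explicitly. This proves both directions for generic parameters.
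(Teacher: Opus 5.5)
Your reduction is the same as the paper's. An equivalence must preserve $Q$ up to a constant and send $P_{a,b}$ to a multiple of $P_{a',b'}$. It therefore fixes the triple contact point $(1:0:0)$ and lies in the Borel subgroup of the stabiliser of the conic; the paper writes these maps, in the chart $z_4=1$, as $(\delta^2z_2+\delta\lambda z_3+\frac{\lambda^2}{2},\,\delta z_3+\lambda)$. Your torus computation and the ``if'' direction are fine.

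The step that fails is the one you flagged as the obstacle. Let $u_\alpha$ be $z_2\mapsto z_2+\alpha z_3+\frac{\alpha^2}{2}z_4$, $z_3\mapsto z_3+\alpha z_4$, $z_4\mapsto z_4$. Carrying out the substitution, the $z_3^2z_4$ coefficient cancels identically, and
\[
P_{a,b}\circ u_\alpha=P_{a-\alpha,\;b+a\alpha-\alpha^2/2}+\Bigl(-\tfrac{\alpha^3}{6}+\tfrac{a\alpha^2}{2}+b\alpha\Bigr)z_4^3,\qquad Q\circ u_\alpha=Q .
\]
So the only condition is that $\alpha$ be a root of $-\frac{t^3}{6}+\frac{at^2}{2}+bt$, i.e.\ $\alpha\in\{0,t_1,t_2\}$, and for generic $(a,b)$ both $t_1,t_2$ are nonzero. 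Your fallback fails for the same reason: the root $0$ is not intrinsically distinguished. The normal form $P_{a',b'}$ only says that \emph{one} of the three finite contact points sits at $t=0$, and $u_{t_1}$ simply moves $t_1$ there.

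Hence the ``only if'' direction is not just unproved but false. For instance, $(a,b)=(\frac43,-\frac12)$ has contact points $t=0,1,3$, and a direct check gives $F_{4/3,-1/2}\circ u_1=F_{1/3,1/3}$. Yet $(a/a')^2=16\neq-\frac32=b/b'$. Since $(a,b)\mapsto(a-t_1,\,b+at_1-\frac{t_1^2}{2})$ is a dominant correspondence, no genericity hypothesis excludes this.

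The paper's proof has exactly the same gap. It concludes $\lambda=0$ ``since $(a,b)$ is generic'', but the constant term $\lambda(\frac{\lambda a}{2}+b-\frac{\lambda^2}{6})$ also vanishes at the two nonzero roots.

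What the Borel computation actually yields is that the classes are the orbits of the affine group $t\mapsto\delta t+\lambda$ on the unordered triple $\{0,t_1,t_2\}$. A complete invariant is $\frac{(a^2+2b)^3}{a^2(a^2+3b)^2}$, a degree-three function of $a^2/b$, so each class is generically a union of three curves $a^2/b=\mathrm{const}$.

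A smaller point: ``the only non-reduced fibres are $P^2$ and $Q^3$'' is also false, because $z_4^2$ divides $P_{a,b}^2+\frac89Q^3$. This is why the foliation has degree $2$. What you need, and what is true, is that $P^2$ and $Q^3$ are the only members of the pencil that are perfect squares or cubes.
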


\begin{proof} The pencils $F_{a,b}$ and $F_{a',b'}$ are equivalent if there exists a linear map
$\Phi: \mathbb{P}^2 \to \mathbb{P}^2$ and a projective transformation $R \in \mathrm{PGL}(2)$ such that
\[
F_{a',b'} \circ \Phi = R(F_{a,b}).
\]

\noindent Since the unique non reduced fibers of the pencil are $Q^3$ and $P_{a,b}^2$, then $R$ must be a constant. We see that $\Phi$ must preserve the line $z_4$ because this is invariant by the foliation, this means that, locally, we can view the transformation $\Phi$ as:

\[
\Phi(z_2,z_3) = (\alpha z_2 + \beta z_3 + \gamma,\; \delta z_3 + \nu z_2 + \lambda).
\]

It must also preserve the conic $Q$, since it does not depend on the parameters $a,b$. The absence of $z_2^2$ terms in $Q$ forces $\nu=0$ and we have also 
$\alpha=\delta^2$, $\beta=\delta\lambda$ and $2\gamma=\lambda^2$. Then the linear morphisms preserving Q and $z_4$ has the following form:
\[
\Phi(z_2,z_3)=(\delta^2 z_2+ \delta \lambda z_3 + \frac{\lambda^2}{2},\delta z_3 + \lambda),\]

\noindent and we obtain:

\begin{align*}
&P_{a,b}(\delta^2 z_2+ \delta \lambda z_3 + \frac{\lambda^2}{2},\delta z_3 + \lambda)=\\
&\delta^3\Bigg(\frac{a-\lambda}{\delta}z_2+\frac{\lambda a+b-\frac{\lambda^2}{2}}{\delta^2}z_3-z_2z_3+\frac{z_3^3}{3}+\lambda \Big(\frac{\lambda a}{2}+b -\frac{\lambda^2}{6} \Big)\Bigg)=\delta^3 P_{a',b'}(z_2,z_3).
\end{align*}

\noindent Since $(a,b)\in\mathbb{C}^2$ is generic, we conclude that $\lambda=0$; and we have $a'=\frac{a}{\delta}, b'=\frac{b}{\delta^2}$.  
Therefore the pencils given by $F_{a,b}$ and $F_{a',b'}$ are equivalent if and only if $(\frac{a}{a'})^2=\frac{b}{b'}$.
\end{proof}

This means that, generically, distinct points in $\C^2$ determine non-conjugate foliations of degree $2$, which are pull-back foliations from $\mathbb{P}^2$ in the boundary of the exceptional component.  Then we have:

\begin{corollary} For $(a,b) \in \C^2$, the rational function

$$F_{a,b} = \frac{(a z_2z_4^2 + b z_3z_4^2 - z_2 z_3z_4 + \frac{z_3^3}{3})^2}{(z_2z_4 - \frac{z_3^2}{2})^3},$$

\noindent is the rational first integral of a pull-back type foliation in the boundary of the exceptional component $\overline{E}$ of $\mathbb{F}(2,\mathbb{P}^3)$. The foliations corresponding to $(a,b)$ and $(a',b') \in \C^2$ are in the same orbit if and only if $a^2b'=ba'^2$. 
    \end{corollary}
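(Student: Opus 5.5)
The corollary has two parts: membership of the foliations defined by $F_{a,b}$ in $\partial\overline{E}$, and the orbit criterion. I will obtain both from the degeneration construction above and the preceding Proposition.

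\textbf{Membership.} Let $\mathcal{F}_\epsilon$ be the foliation obtained from $\mathcal{F}_\omega$ by the change of coordinates $g_\epsilon: z_1\mapsto \epsilon z_1+az_2z_4^{0}+bz_3$, written homogeneously as $z_1\mapsto \epsilon z_1+az_2+bz_3$ with $z_2,z_3,z_4$ fixed. For $\epsilon\neq 0$ this is invertible, so after rescaling to determinant one it lies in $SL_4(\mathbb{C})$. Hence $\mathcal{F}_\epsilon\in SL_4(\mathbb{C})\cdot\omega$, and $\mathcal{F}_\epsilon$ has first integral $F\circ g_\epsilon$.

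Dividing the associated $1$-form by the appropriate power of $\epsilon$ gives a family of $1$-forms in $H^0(\mathbb{P}^3,\Omega^1(4))$. As $\epsilon\to0$ its limit is, up to scalar, the $1$-form
\[
\omega_{a,b}=3Q\,dP_{a,b}-2P_{a,b}\,dQ
\]
divided by the common factor. This form annihilates $dF_{a,b}$, and it is nonzero, since $P_{a,b}^2$ and $Q^3$ are not proportional.

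Next I check that $\omega_{a,b}$ has degree $2$. Its coefficients have degree $3+2-1=4$, while a degree-two foliation on $\mathbb{P}^3$ needs coefficients of degree $3$. So a linear common factor must appear, and I expect it to be $z_4$. The reason is that $P_{a,b}\equiv z_3^3/3$ and $Q\equiv -z_3^2/2$ modulo $z_4$, so $P_{a,b}^2$ and $Q^3$ agree up to a constant on $z_4=0$, which makes $z_4$ a multiple fiber component. After removing this factor I must verify that the singular set has codimension $\ge2$ for generic $(a,b)$. This is the one genuine computation.

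Since $\overline{E}$ is closed, the limit lies in $\overline{E}$. It is not in the open orbit, because it is invariant under the $4$-dimensional group described for face $4$, whereas $\dim Aut(\mathcal{F}_\omega)=2$. Consistently, by the Lemma on $\lambda_4$ it does not depend on $z_1$. Therefore it lies in $\overline{E}-SL_4(\mathbb{C})\cdot\omega$, and in fact in $Z_4$.

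\textbf{Orbit criterion.} Two such foliations lie in the same $SL_4$-orbit if and only if the associated foliations on $\mathbb{P}^2$ are conjugate by a linear map, equivalently the pencils are equivalent. One direction is immediate: a linear equivalence of pencils in $(z_2,z_3,z_4)$ extends to $\mathbb{P}^3$ by fixing $z_1$. For the converse, a projective map sending one linear pull-back to the other must preserve the centre of projection $(1:0:0:0)$, which is the unique point through which all the leaves, being cones, pass. It therefore descends to $\mathbb{P}^2$.

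The preceding Proposition then gives $(a/a')^2=b/b'$, that is $a^2b'=ba'^2$. For the non-generic loci I will check directly that the normal form map $\Phi$ with $\lambda=0$ still realizes $a'=a/\delta$, $b'=b/\delta^2$, and extend by this explicit scaling.

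The main obstacle I expect is the non-genericity. First, the limiting form must be shown to have degree exactly $2$ with codimension-two singular set. Second, the equivalence statement has only been proved for generic $(a,b)$. I would handle the first by computing $\omega_{a,b}/z_4$ explicitly, and would state the second as holding generically, matching the Proposition.
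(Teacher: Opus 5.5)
Your route is the paper's: degenerate along $z_1\mapsto\epsilon z_1+az_2+bz_3$, then quote the preceding Proposition for the orbit criterion. The genuine gap is that this Proposition, and with it the ``only if'' half of the corollary, is false, so that step cannot be completed. You flagged genericity as the difficulty, but the statement already fails for generic $(a,b)$. The paper's proof breaks at ``since $(a,b)$ is generic, $\lambda=0$''. The constant term $\lambda\bigl(\tfrac{\lambda a}{2}+b-\tfrac{\lambda^2}{6}\bigr)$ also vanishes at the roots of $\lambda^2-3a\lambda-6b=0$, which are nonzero whenever $b\neq0$. Each such root gives $(a,b)\sim\bigl(a-\lambda,\lambda(\tfrac a2-\tfrac\lambda3)\bigr)$, up to the $\delta$-rescaling. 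Concretely, take the unipotent upper triangular map $z_2\mapsto z_2+6z_3+18z_4$, $z_3\mapsto z_3+6z_4$, fixing $z_1$ and $z_4$. It preserves $Q$ and sends $P_{1,3}$ to $P_{-5,-9}$. So $(1,3)$ and $(-5,-9)$ give foliations in the same orbit, although $a^2b'=-9\neq 75=ba'^2$. Using $\lambda=a$ to remove the $z_2z_4^2$ term (at the cost of a $z_4^3$ term) shows the preserved quantity is really $J=a^2(a^2+3b)^2/(a^2+2b)^3=\kappa(\kappa+3)^2/(\kappa+2)^3$, with $\kappa=a^2/b$. Both pairs above have $J=100/343$, and generically three values of $a^2/b$ give conjugate foliations. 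What you can prove is the ``if'' direction for $(a,b),(a',b')\neq(0,0)$ (take $\lambda=0$), and that the family contains a one-parameter set of pairwise non-conjugate foliations. Your descent from $\mathbb{P}^3$ to $\mathbb{P}^2$ through the unique vertex of the cones is correct; the paper leaves that step implicit.

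The membership half is fine in outline, but the form you write is wrong. The form $3Q\,dP-2P\,dQ$ belongs to $P^3/Q^2$. Its contraction with the radial field is $5PQ\neq0$, so it does not descend to $\mathbb{P}^3$. Its wedge with $2Q\,dP-3P\,dQ$ is $-5PQ\,dP\wedge dQ\neq0$, so it does not annihilate $dF_{a,b}$. The correct form is $2Q\,dP-3P\,dQ$, and in fact $\omega=(2Q\,dP-3P\,dQ)/z_4$ exactly. Your substitution fixes $z_4$ and $Q$ and sends $P$ to $P_{a,b}+\epsilon z_1z_4^2$. So the transformed form is polynomial in $\epsilon$, with value $\omega_{a,b}=(2Q\,dP_{a,b}-3P_{a,b}\,dQ)/z_4$ at $\epsilon=0$. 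No rescaling is needed, and divisibility by $z_4$ is automatic. Your mod-$z_4$ remark alone would only give $z_4\mid 9P_{a,b}^2+8Q^3$, not a multiple component. Finally, $(a,b)=(0,0)$ must be excluded. Here $\omega_{0,0}=z_2\,(z_3z_4\,dz_2-2z_2z_4\,dz_3+z_2z_3\,dz_4)$ vanishes on a plane, so it is not a point of $\mathbb{F}(2,\mathbb{P}^3)$. This is also why the relation $a^2b'=ba'^2$, which links $(0,0)$ to every pair, is not even transitive on $\C^2$.
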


\backmatter

\bmhead{Acknowledgements}
The first author gratefully acknowledges the Université de Rennes for its hospitality during the preparation of this work.


\bigskip

\begin{appendices}




\end{appendices}


\bibliography{sn-bibliography}

\end{document}